\tikzset{->,
>=stealth,
node distance=3cm}
\newtheorem{thm}{Theorem}
\newtheorem{lem}[thm]{Lemma}
\newtheorem{cor}[thm]{Corollary}
\newtheorem{que}[thm]{Question}
\newtheorem{prop}[thm]{Proposition}
\theoremstyle{definition}
\newtheorem{ex}{Example}
\title[Cyclotomic properties of certain polynomials]{Cyclotomic properties of polynomials associated with automatic sequences}
\author{Bartosz Sobolewski}
\address{Jagiellonian University, Faculty of Mathematics and Computer Science, Institute of Mathematics, {\L}ojasiewicza 6, 30-348 Krak\'{o}w, Poland}
\email{\tt bartosz.sobolewski@doctoral.uj.edu.pl}
\keywords{Automatic sequences, polynomials, cyclotomy, recurrence relations} 
\subjclass[2010]{Primary: 11B85; Secondary: 11B37, 11C08, 11R18, 68Q45}
\begin{document}
\begin{abstract}
We show that polynomials associated with automatic sequences satisfy a certain recurrence relation when evaluated at a root of unity, which generalizes a result of Brillhart, Lomont and Morton on the Rudin--Shapiro polynomials. We study the minimal order of such a relation and the integrality of its coefficients.

\end{abstract}
\maketitle

\section{Introduction}\label{sec:Introduction}

The behavior of polynomials whose coefficients are consecutive terms of automatic sequences has been an object of interest of several authors (we recall the definition of an automatic sequence in Section \ref{sec:Preliminaries}). Probably the most widely studied examples have been the famous Rudin--Shapiro polynomials $P_n, Q_n,$ first studied by Shapiro \cite{Sh53} and Rudin \cite{Ru59} in the context of Fourier analysis. They are defined by $P_0(x) = Q_0(x)=1$ and for $n \geq 0$ the recurrence
\begin{align} \label{eq:RS_def}
P_{n+1}(x) &= P_n(x) + x^{2^n} Q_n(x),\\
Q_{n+1}(x) &= P_n(x) - x^{2^n} Q_n(x). \nonumber
\end{align}
The coefficients of $P_n$ form a $2$-automatic sequence $\{r(n)\}_{n\geq 0}$, called the Rudin--Shapiro sequence, which can be equivalently defined by $r(0)=1$ and for $n \geq 0$
$$r(2n) = r(n), \qquad r(2n+1)= (-1)^n r(n).$$

The direct motivation behind our work is the central result of the paper \cite{BLM76} by Brillhart, Lomont and Morton, who proved that the values of $P_n$ and $Q_n$ at roots of unity satisfy a certain type of recurrence relation.
More precisely, for an $r$th root of unity $\omega$ (not necessarily primitive), where $r > 1$ is odd, an integer $s \geq 2$ such that $2^s \equiv 1 \pmod{r}$ and an auxiliary sequence of polynomials $C_n\in \mathbb{Z}[x]$ (denoted $A_n$ in the original paper) the following result holds.
\begin{thm}[\cite{BLM76}, Theorem 6.1] \label{thm:RS_recur}
Let $\omega$ be an $r$th root of unity, where $r > 1$ is odd. Then for $n \geq 0$ (and $s \geq 2$)
\begin{align*}
P_{n+2s}(\omega) - C_s(\omega) P_{n+s}(\omega) + (-2)^s P_n(\omega) &= 0, \\
Q_{n+2s}(\omega) - C_s(\omega) Q_{n+s}(\omega) + (-2)^s Q_n(\omega) &= 0. \nonumber
\end{align*}
\end{thm}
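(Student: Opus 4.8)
The plan is to express the column vector $(P_n,Q_n)^{\mathsf T}$ as a product of $2\times 2$ ``transfer matrices'' and to observe that, once evaluated at $\omega$, this product becomes periodic with period $s$; the asserted order-$2$ recurrence in steps of $s$ then drops out of the Cayley--Hamilton theorem applied to the corresponding period matrix.

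Concretely, rewrite \eqref{eq:RS_def} in matrix form. Set
\[
M_j(x)=\begin{pmatrix}1 & x^{2^j}\\ 1 & -x^{2^j}\end{pmatrix},\qquad
T_n(x)=M_{n-1}(x)M_{n-2}(x)\cdots M_0(x),\quad T_0(x)=I,
\]
so that $T_{n+1}(x)=M_n(x)T_n(x)$ and, by induction, $(P_n(x),Q_n(x))^{\mathsf T}=T_n(x)\,(1,1)^{\mathsf T}$ for all $n\ge 0$. Since $2^s\equiv 1\pmod r$ and $\omega^r=1$, every $r$th root of unity $\zeta$ satisfies $\zeta^{2^s}=\zeta$; applying this with $\zeta=\omega^{2^j}$ gives $\omega^{2^{j+s}}=\omega^{2^j}$, hence $M_{j+s}(\omega)=M_j(\omega)$ for every $j\ge 0$. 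Splitting the product defining $T_{n+s}$ at the index $s$ and using this periodicity on the upper block, I would deduce
\[
T_{n+s}(\omega)=T_n(\omega)\,A,\qquad\text{where } A:=T_s(\omega),
\]
and consequently $T_{n+2s}(\omega)=T_n(\omega)\,A^{2}$.

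Next I would invoke the Cayley--Hamilton theorem for the $2\times 2$ matrix $A$, namely $A^{2}=(\operatorname{tr}A)\,A-(\det A)\,I$. Substituting this and using $T_{n+s}(\omega)=T_n(\omega)A$ once more yields
\[
T_{n+2s}(\omega)-(\operatorname{tr}A)\,T_{n+s}(\omega)+(\det A)\,T_n(\omega)=0,
\]
and right-multiplying by $(1,1)^{\mathsf T}$ produces exactly the two stated identities, with $\operatorname{tr}A$ in place of $C_s(\omega)$ and $\det A$ in place of $(-2)^s$. The determinant is immediate: $\det M_j(\omega)=-2\omega^{2^j}$, so $\det A=\prod_{j=0}^{s-1}(-2\omega^{2^j})=(-2)^s\,\omega^{\,2^s-1}=(-2)^s$, using $r\mid 2^s-1$ together with $\omega^r=1$. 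For the trace, each $T_n(x)$ has entries in $\mathbb Z[x]$, so $\operatorname{tr}A$ is the value at $x=\omega$ of the polynomial $\operatorname{tr}(T_n(x))\in\mathbb Z[x]$ with $n=s$.

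The one step that genuinely requires care is the identification of this trace polynomial with the auxiliary sequence $C_n$ of the statement: one must check that $\operatorname{tr}(T_n(x))$ coincides with $C_n(x)$, either by adopting $C_n(x):=\operatorname{tr}(T_n(x))$ as the definition, or by verifying that $\operatorname{tr}(T_n(x))$ satisfies the same initial conditions and recursion as $C_n$ (for instance $C_1(x)=1-x$ and, expanding $M_n T_n$, a relation of the shape $C_{n+1}(x)=P_n(x)+x^{2^n}\widetilde Q_n(x)$ for a suitable companion polynomial $\widetilde Q_n$). I expect this bookkeeping — together with confirming that the coefficients lie in $\mathbb Z$, which here is automatic from the transfer-matrix description but becomes the delicate point in the general automatic setting — to be the main obstacle; the matrix rewriting, the periodicity argument, and the determinant computation are routine.
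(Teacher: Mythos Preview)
Your argument is correct and is essentially the same approach the paper uses. The paper does not prove this cited theorem from scratch; it recovers it as a specialization of its general machinery (Proposition~\ref{prop:poly_recur} and Theorem~\ref{thm:recur}) in Example~\ref{ex:RS2}, where the transfer matrix is precisely your $M_j(x)=\widehat{M}(x^{2^j})$, the product $T_n(x)$ is the paper's $\widehat{M}(2^n;x)$, and the recurrence is obtained from the characteristic polynomial of $\widehat{M}(2^s;\omega)$ via Cayley--Hamilton, exactly as you do. Your periodicity step $T_{n+s}(\omega)=T_n(\omega)A$ is the special case of the paper's key identity $\widehat{M}(k^{ms};\omega)=\widehat{M}(k^s;\omega)^m$. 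Your caveat about identifying $\operatorname{tr}T_s(x)$ with the auxiliary $C_s(x)$ of \cite{BLM76} is well placed: the paper simply takes $C_s(x)=\operatorname{tr}\widehat{M}(2^s;x)$ in Example~\ref{ex:RS2} without separately checking agreement with the original definition, so this bookkeeping is external to both arguments.
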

In the same paper the authors also give many results concerning the integrality of the central coefficient $C_s(\omega)$. In particular, if $r$ is an odd prime power and $2$ is a primitive root modulo $r$, then it turns out that $C_s(\omega) \in \mathbb{Z}$. 

Another example is the Thue--Morse sequence, also $2$-automatic, defined by $t(0) = 1$ and for $n \geq 0$ the relations
\begin{equation} \label{eq:TM_rec}
t(2n) = t(n), \qquad t(2n+1)=-t(n).
\end{equation}
The associated Thue--Morse polynomials
$$T(n;x) = \sum_{m=0}^{n-1} t(m) x^m$$
have been considered by Doche and Mend\`{e}s France \cite{DM00}, who studied the average number of their real zeros as $n$ tends to infinity. Doche \cite{Do01} also studied generalizations of the Thue--Morse sequence in the same context. Again, it is fairly easy to show that the Thue--Morse polynomials evaluated at a root of unity of odd order satisfy a two-term recurrence relation (see Section \ref{sec:TM}).

It seems natural to ask whether or not a similar type of recurrence is satisfied at roots of unity by values of polynomials  associated with other automatic sequences. If yes, what can be said about the minimal number of coefficients in such a recurrence and the integrality of these coefficients?
In this paper we consider a general $k$-automatic sequence $\{a(n)\}_{n \geq 0}$  with values in $\mathbb{C}$ and  the polynomials
$$
A(n;x) = \sum_{m=0}^{n-1}a(m) x^m,
$$
of degree at most $n-1$.
Theorem \ref{thm:recur} of Section \ref{sec:Recurrence} answers our first question positively for a general automatic sequence and demonstrates two ways to derive a relation of the form similar as in Theorem \ref{eq:TM_rec}.
(In fact, we show that Theorem \ref{thm:RS_recur} also holds for polynomials associated with the Rudin--Shapiro sequence of degree other than $2^n-1$.) 
 In Section \ref{sec:Order} we bound the minimal number of terms in such a recurrence relation, linking it with certain properties of an automaton inducing the sequence $\{a(n)\}_{n \geq 0}$. Section \ref{sec:Integrality} is dedicated to studying the integrality of  the coefficients of the considered recurrence relation. In the final section we present the proofs of all the results in this paper.

\section{Preliminaries} \label{sec:Preliminaries}
Following \cite[Chapters 4--5]{AS03} we recall the definition and basic facts concerning deterministic finite automata with output and automatic sequences. 
Let $Q$ be a finite \emph{set of states}, $\Sigma$ the finite \emph{input alphabet}, $\delta\colon Q \times \Sigma \to Q$ the \emph{transition function}, $q_0 \in Q$ the \emph{initial state}, $\Delta$ the \emph{output alphabet} and $\uptau\colon Q \to \Delta$ the \emph{output function}.
  The sextuple $\mathcal{A} = (Q,\Sigma,\delta,q_0,\Delta,\uptau)$ is called a \emph{deterministic automaton with output} (DFAO). Denote by $\Sigma^{*}$ the set  of finite words created from letters in $\Sigma$, together with the empty word $\epsilon$. We can extend the definition of $\delta$ to $Q \times \Sigma^{*}$ by putting $\delta(q, \epsilon) = q$ for all $q \in Q$ and $\delta(q, wa) = \delta(\delta(q,w),a)$ for all $q \in Q, w \in \Sigma^{*}$ and $a \in \Sigma$. In other words, the transition function reads the input letter by letter, starting from the left. A state $q \in Q$ is called \emph{accessible} if there exists a word $w \in \Sigma^{*}$ such that $\delta(q_0, w)=q$. The automaton $\mathcal{A}$ defines a \emph{finite-state function} $f\colon \Sigma^{*} \to \Delta$ by $f(w) = \uptau( \delta(q_0, w))$. For any word $w= w_1 \cdots w_l \in \Sigma^{*}$  denote $w^R = w_l \cdots w_1 $. It can be showed that if $f$ is a finite state function, then $f^{R}\colon \Sigma^{*} \to \Delta$ defined by $f^R(w) = f(w^R)$ is also a finite state function, i.e., there exists a DFAO $\mathcal{A}' = (Q',\Sigma,\delta',q_0',\Delta,\uptau')$ such that $f(w) = \uptau( \delta'(q_0', w^R))$ (see \cite[Theorem 4.3.3]{AS03} for a constructive proof). In this situation we will say that $\mathcal{A}'$ reads the input from the right (backward). We always assume that $\mathcal{A}$ reads the input from the left (forward), unless stated otherwise.

Let $k \geq 2$ be an integer base and let $\Sigma_k = \{0,1,\ldots,k-1\}$. We call  a DFAO with input alphabet $\Sigma_k$ a $k$-DFAO. For $w \in \Sigma_k^{*}$ we denote by $[w]_k$ the integer represented in base $k$ by $w$ (we allow leading zeros). Conversely, for any nonnegative integer $n$ we use the notation $(n)_k$ for the base-$k$ expansion of $n$ without leading zeros (we put $(0)_k = \epsilon$ for any $k$).  We say that a sequence $\{a(n)\}_{n \geq 0}$ with values in $\Delta$ is $k$-automatic if there exists a $k$-DFAO $\mathcal{A} = (Q,\Sigma_k,\delta,q_0,\Delta,\uptau)$ such that $a(n) = \uptau( \delta(q_0, w))$ for all $n \geq 0$ and $w$ with $[w]_k = n$. For the purpose of this paper we will say that $\mathcal{A}$ \emph{forward-induces} $\{a(n)\}_{n\geq 0}$. By the earlier discussion, we may also define a $k$-automatic sequence using a $k$-DFAO $\mathcal{A} = (Q,\Sigma_k,\delta,q_0,\Delta,\uptau)$  reading the input starting with the least significant digit, that is $a(n) = \uptau(\delta(q_0, w^R))$ for all $n \geq 0$ and $w$ with $[w]_k = n$. In this case we will say that $\mathcal{A}$ \emph{backward-induces} $\{a(n)\}_{n\geq 0}$. In the situation where it is irrelevant whether $\mathcal{A}$ forward- or backward-induces  $\{a(n)\}_{n\geq 0}$, we will say that $\mathcal{A}$ \emph{induces}  $\{a(n)\}_{n\geq 0}$.
It is in fact enough to assume that $a(n) = \uptau(\delta(q_0, (n)_k))$ for some $k$-DFAO $\mathcal{A}$ for the sequence to be $k$-automatic.
In this case it suffices to add to $Q$ a single state $q_0'$ and modify the transition and output functions accordingly in order to obtain a new $k$-DFAO $\mathcal{A}'$, which forward-induces $\{a(n)\}_{n \geq 0}$ (see \cite[Theorem 5.2.1]{AS03} for more details).

\section{The Thue--Morse polynomials}\label{sec:TM}
In this section we study some recurrence properties of the Thue--Morse polynomials $T(n;x)$ evaluated at roots of unity of odd order. We note that the Thue--Morse sequence is both forward- and backward-induced by the $2$-DFAO in Figure 1,
and can be equivalently defined by
$$t(n) = (-1)^{s_{2}(n)},$$
where $s_2(n)$ is the sum of digits in the binary expansion of $n$.
\begin{figure}[H]
\centering
\begin{tikzpicture}
\node[state, initial](q0){$q_0/1$};
\node[state, right of=q0](q1){$q_1/-1$};

\draw
 (q0) edge[loop above] node{0} (q0)
 (q0) edge[bend left, above] node{1} (q1)
 (q1) edge[bend left, below] node{1} (q0)
 (q1) edge[loop above] node{0} (q1);
\end{tikzpicture}
\caption{A $2$-DFAO inducing the Thue--Morse sequence}
\label{fig:TM}
\end{figure}
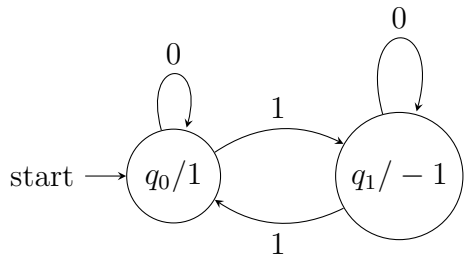
First, we recall some standard properties of the polynomials $T(n;x)$.
\begin{prop}\label{prop:TM_props}
Let $n \geq 1, s \geq 0$ be integers. Then
\begin{enumerate}[label={\textup{(\roman*)}}]
\item $T(2n;x) = (1-x) T(n;x^2);$
\item $T(2^s;x) = \prod_{i=0}^{s-1}(1-x^{2^i});$
\item $T(2^s n;x) = T(n;x^{2^s}) T(2^s;x);$
\item $x^{2^s-1} T\left(2^s; \frac{1}{x}\right) = (-1)^s T(2^s; x).$
\end{enumerate}
\end{prop}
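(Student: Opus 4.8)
The plan is to establish (i) directly from the definition and then bootstrap (ii)--(iv) from it. For (i), I would split the sum $T(2n;x)=\sum_{m=0}^{2n-1}t(m)x^m$ according to the parity of $m$, writing $m=2j$ or $m=2j+1$ with $0\le j\le n-1$. The defining relations \eqref{eq:TM_rec} give $t(2j)=t(j)$ and $t(2j+1)=-t(j)$, so the even part contributes $\sum_{j=0}^{n-1}t(j)x^{2j}$, the odd part contributes $-\sum_{j=0}^{n-1}t(j)x^{2j+1}$, and adding them yields $\sum_{j=0}^{n-1}t(j)x^{2j}(1-x)=(1-x)T(n;x^2)$.

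For (ii), I would induct on $s$. The base case $s=0$ reads $T(1;x)=t(0)=1$, which matches the empty product. For the inductive step, apply (i) with $n=2^s$ to obtain $T(2^{s+1};x)=(1-x)T(2^s;x^2)$, substitute the inductive hypothesis $T(2^s;x^2)=\prod_{i=0}^{s-1}(1-x^{2^{i+1}})$, and reindex the product as $\prod_{i=1}^{s}(1-x^{2^i})$ so that the extra factor $1-x$ extends it to $\prod_{i=0}^{s}(1-x^{2^i})$.

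For (iii) there are two natural routes. The direct one: every $m$ with $0\le m<2^sn$ factors uniquely as $m=2^sq+r$ with $0\le q<n$ and $0\le r<2^s$, and since the binary expansion of $2^sq+r$ is the expansion of $q$ shifted by $s$ places with the (length $\le s$) expansion of $r$ in the low positions, no carries occur and $s_2(2^sq+r)=s_2(q)+s_2(r)$; hence $t(m)=t(q)t(r)$. Summing $t(q)t(r)\,x^{2^sq+r}$ over $q$ and $r$ separates into $\bigl(\sum_{q}t(q)(x^{2^s})^q\bigr)\bigl(\sum_{r}t(r)x^r\bigr)=T(n;x^{2^s})\,T(2^s;x)$. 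Alternatively one can induct on $s$ using (i) together with the identity $T(2^{s+1};x)=(1-x)T(2^s;x^2)$; I would present the direct version since it also explains transparently why $t$ is multiplicative here. For (iv), I would invoke (ii): $T(2^s;1/x)=\prod_{i=0}^{s-1}(1-x^{-2^i})=\prod_{i=0}^{s-1}x^{-2^i}(x^{2^i}-1)$; since $\sum_{i=0}^{s-1}2^i=2^s-1$ and $\prod_{i=0}^{s-1}(x^{2^i}-1)=(-1)^s\prod_{i=0}^{s-1}(1-x^{2^i})=(-1)^sT(2^s;x)$, multiplying through by $x^{2^s-1}$ gives the stated relation.

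None of these steps is genuinely difficult; the only points demanding care are the carry-free digit-sum argument in (iii) (equivalently, keeping track of exponents in the induction) and the reindexing of the finite products in (ii) and (iv). Everything else is routine manipulation of finite sums via the relations \eqref{eq:TM_rec}.
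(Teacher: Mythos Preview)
Your proposal is correct and essentially matches the paper's argument: both derive (i) from the defining recurrence and (ii) from (i) by induction. For (iii) the paper cites \cite[Lemma 8.1]{DM00} and remarks it follows from (i) and (ii), whereas you give the direct carry-free digit-sum argument; for (iv) the paper inducts on $s$ via (ii) while you expand the product from (ii) directly. These are cosmetic variations rather than genuinely different approaches.
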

The derivation for $T(n;x)$ of a similar type of recurrence as in Theorem \ref{thm:RS_recur} is almost immediate. More precisely, let $r \geq 1$ be odd, let $\omega$ be an $r$th root of unity (not necessarily primitive), and fix $s \geq 1$  such that $2^s \equiv 1 \pmod{r}$. By putting $x=\omega$ in identity (iii) of Proposition \ref{prop:TM_props}, we obtain the following result.
\begin{prop} \label{prop:TM_rec}
For all integers $n\geq 1$ we have
$$T(2^s n;\omega) = T(2^s;\omega)  T(n;\omega).$$
\end{prop}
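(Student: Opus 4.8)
The plan is to obtain Proposition~\ref{prop:TM_rec} as an immediate specialization of part~(iii) of Proposition~\ref{prop:TM_props}, exactly as the sentence preceding the statement suggests. Recall that (iii) reads $T(2^s n;x) = T(n;x^{2^s})\,T(2^s;x)$ as a polynomial identity valid for all $n\ge 1$ and $s\ge 0$. The only thing to check is that substituting $x=\omega$ collapses the factor $x^{2^s}$ to $x$, so that $T(n;\omega^{2^s})$ becomes $T(n;\omega)$.

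The key step is therefore the observation that $\omega^{2^s}=\omega$. This follows because $\omega$ is an $r$th root of unity (so $\omega^r=1$) and $s$ was chosen so that $2^s\equiv 1\pmod r$; writing $2^s = 1 + mr$ for some integer $m\ge 0$, we get $\omega^{2^s} = \omega^{1+mr} = \omega\cdot(\omega^r)^m = \omega$. Hence $T(n;\omega^{2^s}) = T(n;\omega)$. Substituting $x=\omega$ into identity~(iii) then yields $T(2^s n;\omega) = T(n;\omega)\,T(2^s;\omega)$, which is the claimed relation after reordering the two factors on the right-hand side.

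There is essentially no obstacle here: the proof is a one-line substitution once Proposition~\ref{prop:TM_props}(iii) is in hand, and the oddness of $r$ plays no role beyond guaranteeing that some $s$ with $2^s\equiv 1\pmod r$ exists (namely any multiple of the multiplicative order of $2$ modulo $r$). The mild subtlety worth a remark is that one does not need $\omega$ to be a \emph{primitive} $r$th root of unity; the argument only uses $\omega^r=1$, so the statement holds for every $r$th root of unity, and in particular the identity is an equality of complex numbers, not of polynomials. I would also note in passing that iterating the relation gives $T(2^{s\ell} n;\omega) = T(2^s;\omega)^{\ell}\,T(n;\omega)$ for all $\ell\ge 0$, which is the two-term recurrence analogue of Theorem~\ref{thm:RS_recur} for the Thue--Morse polynomials.
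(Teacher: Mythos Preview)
Your proof is correct and follows exactly the approach the paper indicates: substitute $x=\omega$ into identity~(iii) of Proposition~\ref{prop:TM_props} and use $\omega^{2^s}=\omega$, which holds because $\omega^r=1$ and $2^s\equiv 1\pmod r$. The additional remarks you include (about primitivity of $\omega$ and the iterated form) are accurate but go slightly beyond what the paper records.
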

A more difficult question concerns the integrality of the coefficient $T(2^s;\omega)$ and computation of its value. We point out that this number is an algebraic integer, hence  $T(2^s;\omega) \in \mathbb{Z}$ if and only if $T(2^s;\omega) \in \mathbb{Q}$.

  Let $r_0\geq 1$ be minimal such that $\omega^{r_0} = 1$ and let $s_0$ denote the multiplicative order of $2$ modulo $r_0$. Clearly, $r$ and $s$ are multiples of $r_0$ and $s_0$, respectively. We can thus restrict ourselves to studying the value $T(2^{s_0};\omega)$, since for any positive integer $m$ we have
$$T(2^{ms_0};\omega) = (T(2^{s_0};\omega))^m.$$
We leave out the trivial case $r_0=1$ from the following considerations, as then $\omega=1, s_0=1$ and $T(2;1)=0$. Let $\varphi$ be the Euler totient function and let $\psi_2 \in \operatorname{Gal}(\mathbb{Q}(\omega)/\mathbb{Q})$ denote the Galois automorphism of $\mathbb{Q}(\omega)$ taking $\omega$ to $\omega^2$. We start with a general observation.

\begin{prop} \label{prop:real}
If $s_0$ is even, then $T(2^{s_0};\omega)$ is real; otherwise, $T(2^{s_0};\omega)$ is purely imaginary. In both cases $T(2^{s_0};\omega)$ lies in the subfield of $\mathbb{Q}(\omega)$ fixed by the subgroup of $\operatorname{Gal}(\mathbb{Q}(\omega)/\mathbb{Q})$ generated by $\psi_2$.
\end{prop}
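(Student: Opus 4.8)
The plan is to work with the explicit product formula for $T(2^{s_0};\omega)$ coming from Proposition \ref{prop:TM_props}(ii), namely
\[
T(2^{s_0};\omega) = \prod_{i=0}^{s_0-1}\bigl(1-\omega^{2^i}\bigr),
\]
and exploit the fact that, since $2^{s_0}\equiv 1\pmod{r_0}$, the exponents $2^0,2^1,\ldots,2^{s_0-1}$ run (with multiplicity) over the cyclic subgroup $\langle 2\rangle\leq (\mathbb{Z}/r_0\mathbb{Z})^{\times}$. Concretely, first I would observe that this product is $\psi_2$-invariant: applying $\psi_2$ cyclically permutes the factors $1-\omega^{2^i}$ (sending the $i$th factor to the $(i+1)$st, and the last back to the first, because $\omega^{2^{s_0}} = \omega$). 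Hence $T(2^{s_0};\omega)$ is fixed by $\psi_2$ and therefore lies in the subfield of $\mathbb{Q}(\omega)$ fixed by $\langle\psi_2\rangle$, which is the second claim.

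Next I would address whether the value is real or purely imaginary. Complex conjugation on $\mathbb{Q}(\omega)$ is the automorphism $\omega\mapsto\omega^{-1}$, so I would compute $\overline{T(2^{s_0};\omega)} = \prod_{i=0}^{s_0-1}(1-\omega^{-2^i})$. Factoring $1-\omega^{-2^i} = -\omega^{-2^i}(1-\omega^{2^i})$ gives
\[
\overline{T(2^{s_0};\omega)} = (-1)^{s_0}\,\omega^{-(2^0+2^1+\cdots+2^{s_0-1})}\,\prod_{i=0}^{s_0-1}\bigl(1-\omega^{2^i}\bigr) = (-1)^{s_0}\,\omega^{-(2^{s_0}-1)}\,T(2^{s_0};\omega),
\]
and since $2^{s_0}\equiv 1\pmod{r_0}$ we have $\omega^{2^{s_0}-1}=1$, so $\overline{T(2^{s_0};\omega)} = (-1)^{s_0}\,T(2^{s_0};\omega)$. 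Thus if $s_0$ is even the value equals its own conjugate and is real, while if $s_0$ is odd it is negated by conjugation and is therefore purely imaginary (it could in principle be $0$, but $T(2^{s_0};\omega)=0$ would force some $\omega^{2^i}=1$, i.e.\ $r_0\mid 2^i$, impossible for $r_0>1$ odd). This disposes of the first claim. The identity $x^{2^s-1}T(2^s;1/x) = (-1)^s T(2^s;x)$ from Proposition \ref{prop:TM_props}(iv) is essentially the same computation packaged as a polynomial identity, so one could alternatively invoke it directly at $x=\omega$.

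I do not anticipate a serious obstacle here: both assertions reduce to the two elementary manipulations above (cyclic permutation of factors under $\psi_2$, and the conjugation/reciprocal identity), together with the arithmetic fact $2^0+2^1+\cdots+2^{s_0-1} = 2^{s_0}-1 \equiv 0 \pmod{r_0}$. The only point requiring a word of care is the degenerate case $r_0=1$, which has already been set aside in the surrounding text, so within the stated hypotheses ($r_0>1$ odd) the argument is clean and the nonvanishing of the product is immediate.
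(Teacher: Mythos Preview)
Your proposal is correct and follows essentially the same approach as the paper: the paper invokes Proposition~\ref{prop:TM_props}(iv) at $x=\omega$ to get $\overline{T(2^{s_0};\omega)}=(-1)^{s_0}T(2^{s_0};\omega)$ (which you derive directly from the product and then note is equivalent), and for the second part simply observes $T(2^{s_0};\omega^2)=T(2^{s_0};\omega)$, which is exactly your cyclic-permutation-of-factors argument.
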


Using this observation, we can determine the possible values of $T(2^{s_0};\omega)$ when $r_0$ is an odd prime power and either $2$ is a primitive root modulo $r_0$ or $s_0 = \varphi(r_0)/2$ is odd.
\begin{prop} \label{prop:prime_power}
Assume that $r_0 = p^{\alpha}$, where $p$ is an odd prime number. We have the following:
\begin{enumerate}[label={\textup{(\roman*)}}]
\item$T(2^{s_0},\omega) \in \mathbb{Z}$ if and only if $s_0=\varphi(r_0)$. In this case $T(2^{s_0},\omega) = p$;
\item If $s_0 = \varphi(r_0)/2$ is odd, then $\{T(2^{s_0};\omega),  T(2^{s_0};\omega^{-1}) \} = \{i \sqrt{p}, -i \sqrt{p}\}.$
\end{enumerate}
\end{prop}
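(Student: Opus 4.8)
The plan is to work with the explicit product formula $T(2^{s_0};\omega) = \prod_{i=0}^{s_0-1}(1-\omega^{2^i})$ from Proposition~\ref{prop:TM_props}(ii), and to exploit the fact that, since $2$ has order $s_0$ modulo $r_0 = p^\alpha$, the exponents $2^0, 2^1, \dots, 2^{s_0-1}$ run (without repetition) over a subgroup $H$ of $(\mathbb{Z}/r_0\mathbb{Z})^\times$ of size $s_0$. Thus $T(2^{s_0};\omega)$ is, up to the trivial considerations already handled, a partial product of $\prod_{\gcd(j,r_0)=1}(1-\omega^j) = \Phi_{r_0}^{*}(\text{something})$; more precisely, grouping the full set of primitive residues into cosets of $H$, one gets that $\prod_{h \in H}(1-\omega^{h})$ is a conjugate (under $\operatorname{Gal}(\mathbb{Q}(\omega)/\mathbb{Q})$) of $T(2^{s_0};\omega)$ for each coset, and the product over all cosets equals $\Phi_{p^\alpha}(1)$ when $\omega$ is primitive. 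Recall $\Phi_{p^\alpha}(1) = p$.

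For part (i): the ``if'' direction is the case $s_0 = \varphi(r_0)$, i.e. $H$ is all of $(\mathbb{Z}/r_0\mathbb{Z})^\times$, so $T(2^{s_0};\omega) = \prod_{\gcd(j,r_0)=1}(1-\omega^j)$. When $\omega$ is a primitive $p^\alpha$th root of unity this is exactly $\Phi_{p^\alpha}(1) = p$; when $\omega$ is a primitive $p^\beta$th root of unity for some $1 \le \beta \le \alpha$ one still gets $\prod_{h\in(\mathbb{Z}/r_0\mathbb{Z})^\times}(1-\omega^h)$, and since each primitive $p^\beta$th root appears with equal multiplicity $p^{\alpha-\beta}$ among the $\omega^h$, this product is $\Phi_{p^\beta}(1)^{p^{\alpha-\beta}} = p^{p^{\alpha-\beta}}$ --- but here one must be careful, since in this situation $s_0$ is the order of $2$ mod $p^\beta$, not mod $p^\alpha$; I would instead simply note that the whole analysis should be run with $r_0$ the \emph{exact} order of $\omega$, so the primitive case is the only one and the value is $p$. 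For the ``only if'' direction, suppose $s_0 < \varphi(r_0)$. Then by Proposition~\ref{prop:real}, $T(2^{s_0};\omega)$ lies in the fixed field $K$ of $\langle \psi_2\rangle$, which is a proper subextension of $\mathbb{Q}(\omega)$ of degree $s_0 < \varphi(r_0)$ over $\mathbb{Q}$; I must show $T(2^{s_0};\omega) \notin \mathbb{Q}$. The cleanest route: compute the norm $N_{\mathbb{Q}(\omega)/\mathbb{Q}}\bigl(T(2^{s_0};\omega)\bigr) = \prod_{\text{cosets } gH}\prod_{h\in H}(1-\omega^{gh}) = \Phi_{p^\alpha}(1) = p$, so if $T(2^{s_0};\omega)$ were a rational (hence an algebraic) integer it would be a rational integer whose $\varphi(r_0)$th power... no: its norm from the degree-$s_0$ field $K$ would be $\pm$ a power; more carefully $N_{\mathbb{Q}(\omega)/\mathbb{Q}}(T) = N_{K/\mathbb{Q}}(T)^{[\mathbb{Q}(\omega):K]} = T^{\varphi(r_0)/s_0}$ if $T\in\mathbb{Q}$, forcing $T^{\varphi(r_0)/s_0} = p$, impossible for an integer $T$ unless $\varphi(r_0)/s_0 = 1$. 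Hence $s_0 = \varphi(r_0)$, and this argument also nails down $T = p$ (taking $s_0$th roots: $T^{\varphi(r_0)/s_0}=p$ with $T$ an algebraic integer generating a field of degree exactly $s_0$ forces the exponent to be $1$).

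For part (ii): assume $s_0 = \varphi(r_0)/2$ is odd. By Proposition~\ref{prop:real}, $s_0$ odd means $T(2^{s_0};\omega)$ is purely imaginary, say $T(2^{s_0};\omega) = i y$ with $y$ real. The index-$2$ subgroup $H = \langle 2\rangle$ of $(\mathbb{Z}/r_0\mathbb{Z})^\times$ has exactly one nontrivial coset, represented by any quadratic nonresidue $g$; the automorphism $\psi_g : \omega\mapsto\omega^g$ sends $T(2^{s_0};\omega) = \prod_{h\in H}(1-\omega^h)$ to $\prod_{h\in H}(1-\omega^{gh}) = T(2^{s_0};\omega^g)$, the ``other'' conjugate over $K$. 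Now the product of the two conjugates is the full norm: $T(2^{s_0};\omega)\cdot T(2^{s_0};\omega^{g}) = \prod_{\gcd(j,r_0)=1}(1-\omega^j) = \Phi_{p^\alpha}(1) = p$. On the other hand, when $-1 \in H$ (which holds iff $-1$ is a power of $2$ mod $p^\alpha$, i.e. iff $2$ generates a subgroup containing $-1$; this is automatic here since $[(\mathbb{Z}/r_0\mathbb{Z})^\times : H]=2$ and... actually one must check this) the conjugate $T(2^{s_0};\omega^{g})$ for $g$ a nonresidue equals $T(2^{s_0};\overline{\omega}\cdot\omega^{g'})$ type expressions --- the key point I would establish is that $T(2^{s_0};\omega^g) = \overline{T(2^{s_0};\omega)} = -iy$ when $g\equiv -1$ is \emph{not} in $H$, which happens precisely when $-1$ is a quadratic nonresidue mod $p$, equivalently $p\equiv 3\pmod 4$; combined with $s_0$ odd one checks these conditions are consistent. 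Then $p = T\cdot\overline{T} = |T|^2 = y^2$, so $y = \pm\sqrt p$ and $\{T(2^{s_0};\omega), T(2^{s_0};\omega^{-1})\} = \{i\sqrt p, -i\sqrt p\}$.

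\medskip

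\noindent\textbf{Main obstacle.} The delicate part is the bookkeeping in part (ii): one must verify that when $s_0 = \varphi(p^\alpha)/2$ is odd, the hypotheses force $-1 \notin \langle 2\rangle$ in $(\mathbb{Z}/p^\alpha\mathbb{Z})^\times$, so that complex conjugation $\omega\mapsto\omega^{-1}$ is exactly the nontrivial coset representative and $T(2^{s_0};\omega^{-1}) = \overline{T(2^{s_0};\omega)}$; only then does the norm computation $T\cdot\overline{T}=p$ yield $|T| = \sqrt p$. (If instead $-1\in\langle 2\rangle$, then $T(2^{s_0};\omega)$ would be real, contradicting Proposition~\ref{prop:real} and the assumption that $s_0$ is odd --- so in fact the parity assumption on $s_0$ does the work, but spelling this out cleanly is the crux.) Everything else reduces to the standard evaluation $\Phi_{p^\alpha}(1) = p$ and the coset decomposition of the cyclotomic norm.
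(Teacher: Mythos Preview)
Your approach is essentially the same as the paper's: both parts rest on the coset decomposition $\prod_{gH} \psi_g\bigl(T(2^{s_0};\omega)\bigr) = \Phi_{r_0}(1) = p$ (the paper isolates this as a lemma), from which (i) follows since $T\in\mathbb{Z}$ forces $T^{q}=p$ with $q=\varphi(r_0)/s_0$, and (ii) follows once $-1\notin\langle 2\rangle$ so that the unique nontrivial conjugate is $\overline{T}$. Your ``main obstacle'' is not an obstacle at all: $s_0$ odd immediately gives $-1\notin\langle 2\rangle$ (an element of order~$2$ cannot lie in a group of odd order), exactly as the paper argues---the digression through quadratic residues and $p\equiv 3\pmod 4$ is correct but unnecessary.
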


We now turn our attention to the situation when $r_0$ has two or more distinct prime factors. In the following proposition we give the only possible integral values of $T(2^{s_0};\omega)$ and exhibit two special cases when it is possible to determine whether $T(2^{s_0};\omega)$ is an integer.
\begin{prop} \label{prop:distinct_factors}
Assume that $r_0$ is odd and has at least two distinct  prime factors. We have the following:
\begin{enumerate}[label={\textup{(\roman*)}}]
 \item If $T(2^{s_0};\omega) \in \mathbb{Z}$, then $T(2^{s_0};\omega) \in \{1,-1\}$;
 \item If $s_0 = \varphi(r_0)/2$ and $2^{s_0/2} \not\equiv -1 \pmod{r_0}$, then $T(2^{s_0};\omega) \in \{1,-1\}$;
 \item If $s_0$ is even and $2^{s_0/2} \equiv -1 \pmod{r_0}$, then $T(2^{s_0};\omega) \in \mathbb{R} \setminus \mathbb{Z}$.
 \end{enumerate}
\end{prop}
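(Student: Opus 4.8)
The plan is to build on Propositions~\ref{prop:real} and on the structural description of $T(2^{s_0};\omega)$ already used in the prime-power case. Write $N = T(2^{s_0};\omega) = \prod_{i=0}^{s_0-1}(1-\omega^{2^i})$. The key algebraic fact I would extract first is the norm computation: grouping the Galois orbit of $\omega$ under $\psi_2$ shows that $N$ equals (up to the unit sign coming from Proposition~\ref{prop:TM_props}(iv)) a product over a set of conjugates, and that $N \cdot \overline{N} = N \psi_2^{\,?}(N)$ relates to $\prod_{j}(1-\omega^j)$ taken over a subgroup coset. Concretely, $|N|^2$ (or $\pm N^2$ when $s_0$ is odd) is, up to sign, the value of the $r_0$-th cyclotomic-type product $\prod_{\gcd(j,r_0)=1,\ j \in \langle 2\rangle}(1-\zeta^j)$ evaluated along the cyclic subgroup $\langle 2 \rangle \le (\mathbb{Z}/r_0\mathbb{Z})^\times$. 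This reduces everything to understanding $\Phi_{r_0}(1)$ and its "sub-products".

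For part~(i): since $N$ is an algebraic integer lying (by Proposition~\ref{prop:real}) in the fixed field of $\langle\psi_2\rangle$, if $N \in \mathbb{Z}$ then $N^{[(\mathbb{Z}/r_0)^\times : \langle 2\rangle]}$ (a full norm down to $\mathbb{Q}$, up to sign) equals $\pm\prod_{\gcd(j,r_0)=1}(1-\omega^j) = \pm\Phi_{r_0}(1)$. The classical evaluation $\Phi_{r_0}(1) = p$ if $r_0$ is a prime power $p^\alpha$ and $\Phi_{r_0}(1)=1$ if $r_0$ has at least two distinct prime factors is exactly what I would invoke: in the two-prime-factor case the product is $1$, forcing $N^{[\cdots]} = \pm 1$, hence $N \in \{1,-1\}$ (an integer whose power is $\pm1$). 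For part~(ii): when $s_0 = \varphi(r_0)/2$ the index $[(\mathbb{Z}/r_0)^\times : \langle 2\rangle]$ equals $2$, so $N$ generates at most a quadratic subfield; the hypothesis $2^{s_0/2}\not\equiv -1\pmod{r_0}$ means complex conjugation (the automorphism $\omega\mapsto\omega^{-1}$) is \emph{not} in $\langle 2\rangle$, so $\overline N$ is the nontrivial conjugate of $N$ and $N\overline N = \pm\Phi_{r_0}(1) = \pm 1$ while $N$ is real by Proposition~\ref{prop:real} ($s_0$ is even here since $2 \nmid \varphi(r_0)/2$ would... — I need to check the parity, but $s_0 = \varphi(r_0)/2$ with $r_0$ composite odd makes $s_0$ even in the relevant cases); combining $N = \overline N$ with $N\overline N = \pm 1$ and $N$ a real algebraic integer gives $N = \pm 1$. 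For part~(iii): now $2^{s_0/2}\equiv -1\pmod{r_0}$ puts complex conjugation inside $\langle 2\rangle$, so $\overline N = N$ again gives $N$ real, consistent with $s_0$ even and Proposition~\ref{prop:real}; but now the relevant norm relation reads $N \cdot \sigma(N) = \pm 1$ for $\sigma$ the generator of $\operatorname{Gal}$ of the fixed field, and a separate argument shows $N \notin \mathbb{Z}$: if it were, parts~(i) would already bound it to $\pm 1$, and I would rule that out by a sign/size estimate — e.g. showing $|N|^2 = \prod |1-\omega^{2^i}|^2 \neq 1$ by a direct argument (the factors $|1-\omega^{2^i}|$ are generically not all close to $1$, and their product being a nontrivial real algebraic integer of degree $>1$ contradicts $N = \pm1$). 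The cleanest route for~(iii) is: $N$ is a real algebraic integer in a field of degree $[(\mathbb{Z}/r_0)^\times:\langle 2\rangle] \ge 2$ over $\mathbb{Q}$ whose conjugates multiply to $\pm 1$; if $N$ were rational it would be a unit $\pm 1$, but then \emph{all} conjugates equal $\pm1$, forcing $1 - \omega^{2^i}$ to be roots of unity times... which fails because $1-\zeta$ for $\zeta \ne 1$ an $r_0$-th root of unity with $r_0$ not a prime power is never a unit of absolute value differing from the others in a way compatible with all being $\pm1$ — more simply, $N \in \mathbb{R}\setminus\mathbb{Z}$ follows because the only integer value allowed by~(i) is $\pm1$ and the explicit product $\prod_{i}(1-\omega^{2^i})$ is not $\pm 1$ as its square is $\Phi_{r_0}(1)^{1/[\cdots]}\cdots$ — hmm.

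The main obstacle, as the above ind/parenthetical hedging signals, is part~(iii): ruling out $N \in \mathbb{Z}$ requires more than the norm relation (which only says $N$ is a unit if rational). I expect to handle it by an absolute-value argument: show directly that $\bigl|T(2^{s_0};\omega)\bigr| \neq 1$. Since $T(2^{s_0};\omega) = \prod_{i=0}^{s_0-1}(1-\omega^{2^i})$ and $\{2^i \bmod r_0\} = \langle 2\rangle$ has size $s_0$, we get $|T(2^{s_0};\omega)|^2 = \prod_{j \in \langle 2\rangle}(1-\omega^j)(1-\omega^{-j})$, and because $2^{s_0/2}\equiv -1$, the set $\langle 2\rangle$ is stable under negation so this equals $\prod_{j\in\langle 2\rangle}|1-\omega^j|^2$, a positive real algebraic integer. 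If this equals $1$ then by taking the full $\operatorname{Gal}(\mathbb{Q}(\omega)/\mathbb{Q})$-norm one recovers $\Phi_{r_0}(1)^2 = 1$, which holds — so norm alone does not finish it. Instead I would argue that equality $\prod_{j\in\langle 2\rangle}|1-\omega^j| = 1$ combined with the analogous product over each coset of $\langle 2\rangle$ (each being $\Phi_{r_0}(1) = 1$ in total) forces a rigidity that contradicts $r_0$ having two distinct prime factors — or, most cleanly, I would cite/re-derive that $T(2^{s_0};\omega)$ generates a field of degree exactly $[(\mathbb{Z}/r_0\mathbb{Z})^\times : \langle 2 \rangle] > 1$, which is incompatible with $T(2^{s_0};\omega) \in \mathbb{Z}$; the degree computation is the technical heart and would be carried out using that the $\psi_2$-orbit sums of $\omega$ are linearly independent over $\mathbb{Q}$ exactly when $r_0$ is not a prime power handled in Proposition~\ref{prop:prime_power}. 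I would spell out this degree/linear-independence argument carefully, as it is the one genuinely new ingredient beyond classical cyclotomic facts.
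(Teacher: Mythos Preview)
Your arguments for parts~(i) and~(ii) are essentially the same as the paper's: both rest on Lemma~\ref{lem:conj} (the norm-to-$\Phi_{r_0}(1)$ identity) combined with $\Phi_{r_0}(1)=1$ when $r_0$ has two distinct prime factors. Your hesitation about the parity of $s_0$ in~(ii) is easily resolved: since $r_0$ is odd with at least two prime factors, $4 \mid \varphi(r_0)$, so $s_0=\varphi(r_0)/2$ is even; and the stray $\pm$ in ``$N\overline N=\pm\Phi_{r_0}(1)$'' should simply be $+$, as $N\overline N=|N|^2$.

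Part~(iii), however, has a genuine gap, and your proposed ``cleanest route'' is actually false. You suggest showing that $T(2^{s_0};\omega)$ generates a field of degree exactly $[(\mathbb{Z}/r_0\mathbb{Z})^\times:\langle 2\rangle]>1$ over $\mathbb{Q}$. But part~(ii) itself exhibits infinitely many $r_0$ with two prime factors for which $T(2^{s_0};\omega)=\pm1\in\mathbb{Q}$, so no such degree statement can hold in general. The norm and absolute-value approaches you sketch also fail for the reason you already noticed: the full norm down to $\mathbb{Q}$ is $\Phi_{r_0}(1)=1$, which is perfectly compatible with $N=\pm1$.

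The paper's argument for~(iii) is quite different and uses the hypothesis $2^{s_0/2}\equiv -1\pmod{r_0}$ much more sharply. First, this congruence lets one pair the factors of $T(2^{s_0};\omega)$ into complex-conjugate pairs, yielding $T(2^{s_0};\omega)=|T(2^{s_0/2};\omega)|^2\ge 0$; so if $T(2^{s_0};\omega)\in\mathbb{Z}$ then by~(i) it equals $1$. Next, the functional equation of Proposition~\ref{prop:TM_props}(iv) applied to $T(2^{s_0/2};\cdot)$ gives
\[
\overline{T(2^{s_0/2};\omega)}=(-1)^{s_0/2}\omega^{2}\,T(2^{s_0/2};\omega),
\]
and combining this with $|T(2^{s_0/2};\omega)|^2=1$ yields $\bigl[\omega\,T(2^{s_0/2};\omega)\bigr]^2=(-1)^{s_0/2}$. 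One then splits on $s_0\bmod 4$: if $s_0\equiv 2\pmod 4$ this puts $\sqrt{-1}$ in $\mathbb{Q}(\omega)$, impossible for $r_0$ odd; if $s_0\equiv 0\pmod 4$ then $\omega\,T(2^{s_0/2};\omega)=\pm1$ is fixed by every automorphism, yet a direct computation shows $\psi_2$ sends it to its negative. This contradiction is the missing idea in your proposal.
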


Direct computation shows that the converse implication in Proposition \ref{prop:distinct_factors}(ii) does not hold.
We have performed numerical calculations of $T(2^{s_0};\omega)$  in Mathematica for odd $r_0 \in [3,10^5]$ having at least two distinct prime factors and $\omega = \exp(2 \pi i/r_0)$. The choice of $\omega$ for each fixed $r_0$ may only affect the value $T(2^{s_0};\omega)$ if this coefficient is not integral, which is irrelevant in the following discussion. 
Our aim has been to investigate how often $T(2^{s_0};\omega) = 1$ and $T(2^{s_0};\omega) = -1$, as this distinction does not follow from the results above. By Proposition \ref{prop:real} and Proposition \ref{prop:distinct_factors}(iii) we can restrict our attention to the set $R$ of $r_0$ such that $s_0$ is even and $2^{s_0/2} \not\equiv -1 \pmod{r_0}$. This condition holds for $32921$ out of all $40315$ considered values $r_0$. We further partition $R$ depending on whether $T(2^{s_0};\omega) = 1, T(2^{s_0};\omega) = -1$ or $T(2^{s_0};\omega) \not\in \mathbb{Z}$ as well as whether $\varphi(r_0)=2s_0$ or $\varphi(r_0) > 2s_0$.
In Table \ref{tab:num} below we give the cardinality of each such subset.
\begin{table}[H] 
\begin{center}
\begin{tabular}{l|ll|l} 
&$\varphi(r_0)=2s_0$ & $\varphi(r_0) > 2s_0$ & Total   \\ \hline
$T(2^{s_0};\omega) = 1$  & 2728  &   1143  & 3871   \\
$T(2^{s_0};\omega) = -1$  & 2935  &    1481 & 4416   \\
$T(2^{s_0};\omega) \not\in \mathbb{Z}$  & 0 & 24634 & 24634 \end{tabular}
\end{center}
\caption{Numerical results concerning the integrality of $T(2^{s_0};\omega)$}
\label{tab:num}
\end{table}
The case $\varphi(r_0)=2s_0$ corresponds to Proposition \ref{prop:distinct_factors}(ii), thus $T(2^{s_0};\omega) \in \{1,-1\}$ for all such $r_0$. We see that the value $1$ is attained in about $48.2\%$ of cases. However, we have not been able to identify a general rule determining whether $T(2^{s_0};\omega) = 1$ or $T(2^{s_0};\omega) = -1$. The second case $\varphi(r_0)>2s_0$ reveals some ``unexpected'', though rare, occurences of $T(2^{s_0};\omega) \in \{1,-1\}$, which are not covered by Proposition \ref{prop:distinct_factors}(ii). More precisely, we have $T(2^{s_0};\omega) = 1$ approximately $4.2\%$ of the time, while  $T(2^{s_0};\omega) = -1$ occurs about $5.4\%$ of the time.

\section{The polynomial matrix associated with an automaton} \label{sec:Matrix}
In this section we start working towards obtaining a recurrence relation (formulated precisely in Section \ref{sec:Recurrence}) involving polynomials associated with an arbitrary automatic sequence. 
Consider a $k$-DFAO $\mathcal{A} = (Q,\Sigma_k,\delta,q_0,\Delta,\uptau)$ with $Q = \{q_0, q_1, \ldots, q_{d-1}\}$  and $\Delta \subset \mathbb{C}$.
We associate with $\mathcal{A}$ a $d \times d$ matrix $M(x) = [m_{ij}(x)]_{0 \leq i,j \leq d-1}$ with polynomial entries, where
$$m_{ij}(x) = \sum_{\underset{ \delta(q_i , a) = q_j}{a \in \Sigma_k}} x^a.$$
The matrix $M(x)$ carries all the relevant information concerning the transitions between states in $\mathcal{A}$ and does not depend on the output.
Note that $M(1)$ is the transpose of the incidence matrix associated with $\mathcal{A}$ (cf. \cite[Section 8.2]{AS03}). 
We also denote for $t \geq 0$
\begin{align*}
M(k^t;x) &= [m_{ij}(k^t;x)]_{0 \leq i,j \leq d-1} = M(x^{k^{t-1}})  M(x^{k^{t-2}}) \cdots  M(x^{k})  M(x), \\
M^{R}(k^t;x) &= [m^R_{ij}(k^t;x)]_{0 \leq i,j \leq d-1} = M(x) M(x^{k})\cdots  M(x^{k^{t-2}}) M(x^{k^{t-1}}).
\end{align*}
In particular, $M(k;x)= M^R(k;x)=M(x)$. These matrices will play an important role in the results of Section \ref{sec:Recurrence}. Below we establish some of their basic properties.
\begin{prop} \label{prop:matrix_coef}
For all $t \geq 0$ we have
\begin{align}
m_{ij}(k^t;x) &= \sum_{\underset{\delta(q_i , w)=q_j}{w \in \Sigma_k^t}} x^{[w]_k}, \label{eq:matrix_coef_1}\\ 
m^R_{ij}(k^t;x) &= \sum_{\underset{\delta(q_i , w)=q_j}{w \in \Sigma_k^t}} x^{[w^R]_k}.  \label{eq:matrix_coef_2}
\end{align}
\end{prop}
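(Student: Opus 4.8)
The plan is to prove both identities \eqref{eq:matrix_coef_1} and \eqref{eq:matrix_coef_2} simultaneously by induction on $t$, exploiting the fact that $m_{ij}(k^t;x)$ and $m^R_{ij}(k^t;x)$ are defined as ordered products of the single-step matrices $M(x^{k^i})$, just with the factors multiplied in opposite orders. For the base case $t=0$ one has $M(k^0;x) = M^R(k^0;x) = I$ (the empty product), while the right-hand sides sum over the single empty word $w = \epsilon \in \Sigma_k^0$, for which $\delta(q_i,\epsilon) = q_j$ forces $i=j$ and contributes $x^{[\epsilon]_k} = x^0 = 1$; so both sides equal the identity matrix. (The case $t=1$ reduces to the definition of $m_{ij}(x)$, since a length-one word $w = a$ satisfies $[w]_k = [w^R]_k = a$.)

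For the inductive step on \eqref{eq:matrix_coef_1}, I would write $M(k^{t+1};x) = M(x^{k^t}) \cdot M(k^t;x)$ and compute the $(i,j)$ entry as $\sum_{l} m_{il}(x^{k^t}) \, m_{lj}(k^t;x)$. By the inductive hypothesis and the definition of $M$, this is
$$
\sum_{l=0}^{d-1} \Biggl( \sum_{\substack{a \in \Sigma_k \\ \delta(q_i,a) = q_l}} x^{a k^t} \Biggr) \Biggl( \sum_{\substack{w \in \Sigma_k^t \\ \delta(q_l, w) = q_j}} x^{[w]_k} \Biggr).
$$
Expanding the product, each term corresponds to a letter $a$ and a word $w$ of length $t$ with $\delta(q_i, a) = q_l$ and $\delta(q_l, w) = q_j$ for the common intermediate state $q_l$; by the extended transition function this is exactly $\delta(q_i, aw) = q_j$, and the exponent $a k^t + [w]_k$ is precisely $[aw]_k$ since $aw$ has length $t+1$ with most significant digit $a$. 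As $l$ ranges over all states and $(a,w)$ over all admissible pairs, the word $aw$ ranges bijectively over all $w' \in \Sigma_k^{t+1}$ with $\delta(q_i, w') = q_j$ (the intermediate state $q_l = \delta(q_i, a)$ being uniquely determined). This gives the claimed formula for $t+1$.

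The argument for \eqref{eq:matrix_coef_2} is symmetric but uses the other factorization: $M^R(k^{t+1};x) = M^R(k^t;x) \cdot M(x^{k^t})$, so the $(i,j)$ entry is $\sum_l m^R_{il}(k^t;x)\, m_{lj}(x^{k^t})$. Here a length-$t$ word $w$ with $\delta(q_i,w) = q_l$ is followed by a single letter $a$ with $\delta(q_l, a) = q_j$, i.e.\ $\delta(q_i, wa) = q_j$, and the exponent $[w^R]_k + a k^t$ equals $[(wa)^R]_k = [a w^R]_k$, since reversing $wa$ puts $a$ in the most significant position ahead of $w^R$, which has length $t$. Again the pair $(w, a)$ corresponds bijectively to words $w' = wa \in \Sigma_k^{t+1}$ with $\delta(q_i, w') = q_j$. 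I do not anticipate a genuine obstacle here; the only point requiring care is bookkeeping the digit-position arithmetic so that $a k^t + [w]_k = [aw]_k$ in the forward case versus $[w^R]_k + a k^t = [(wa)^R]_k$ in the backward case — i.e.\ making sure the new letter lands in the correct (most significant) position in each case — and confirming that the intermediate state $q_l$ is uniquely pinned down so that summing over $l$ produces no overcounting.
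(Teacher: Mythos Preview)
Your proof is correct and follows essentially the same approach as the paper: induction on $t$, using the factorizations $M(k^{t+1};x) = M(x^{k^t})\,M(k^t;x)$ and $M^R(k^{t+1};x) = M^R(k^t;x)\,M(x^{k^t})$, then identifying the resulting double sum over $(a,w)$ with the sum over words of length $t+1$ via the digit arithmetic $ak^t + [w]_k = [aw]_k$ (respectively $[w^R]_k + ak^t = [(wa)^R]_k$). Your write-up is in fact more detailed than the paper's, which only spells out case \eqref{eq:matrix_coef_1} and declares \eqref{eq:matrix_coef_2} analogous.
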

Roughly speaking, both $m_{ij}(k^t;x)$ and $m^R_{ij}(k^t;x)$  describe all paths of length $t$ between the states $q_i$ and $q_j$.
We may also look at $M(k^t;x)$ and $M^R(k^t;x)$ as polynomials in $x$ with matrix coefficients. For $w \in \Sigma_k^{*}$ let $M_w$ be the $d \times d$ integer matrix whose entry $m_{w,ij}$ at position $(i,j) \in \{0,1,\ldots,d-1\}^2$ is equal to $1$ if $\delta(q_i , w) = q_j$ and $0$ otherwise. In particular, $M_\epsilon$ is the $d \times d$ identity matrix.
 The following  observation provides an alternative description of $M(k^t;x)$ and $M^R(k^t;x)$, where the matrices $M_w$ play the role of the coefficients of an appropriate polynomial.
\begin{prop} \label{prop:matrix_poly}
For all $t \geq 0$ we have
\begin{align}
M(k^t;x) = \sum_{w \in \Sigma_k^t}  M_w x^{[w]_k}, \label{eq:matrix_poly_1} \\
M^R(k^t;x) = \sum_{w \in \Sigma_k^t}M_{w} x^{[w^R]_k}. \label{eq:matrix_poly_2}
\end{align}
\end{prop}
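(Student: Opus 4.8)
The plan is to prove both identities of Proposition \ref{prop:matrix_poly} by induction on $t$, using the recursive definitions of $M(k^t;x)$ and $M^R(k^t;x)$ given in Section \ref{sec:Matrix}. The base case $t=0$ is immediate: $\Sigma_k^0 = \{\epsilon\}$, $[\epsilon]_k = [\epsilon^R]_k = 0$, and $M_\epsilon$ is the identity matrix, which matches both $M(k^0;x)$ and $M^R(k^0;x)$ (the empty product of matrices). One could alternatively start the induction at $t=1$, where both sides reduce to $M(x) = \sum_{a \in \Sigma_k} M_a x^a$, which is just the definition of $M(x)$ rewritten in terms of the coefficient matrices $M_a$; this is essentially the observation that $m_{ij}(x) = \sum_{a : \delta(q_i,a)=q_j} x^a = \sum_{a \in \Sigma_k} m_{a,ij}\, x^a$.

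For the inductive step for \eqref{eq:matrix_poly_1}, I would write $M(k^{t+1};x) = M(x^{k^t}) M(k^t;x)$, apply the inductive hypothesis to the second factor and the $t=1$ identity (with $x$ replaced by $x^{k^t}$) to the first, obtaining
\[
M(k^{t+1};x) = \Bigl(\sum_{a \in \Sigma_k} M_a x^{a k^t}\Bigr)\Bigl(\sum_{v \in \Sigma_k^t} M_v x^{[v]_k}\Bigr) = \sum_{a \in \Sigma_k}\sum_{v \in \Sigma_k^t} M_a M_v\, x^{a k^t + [v]_k}.
\]
The two facts that close the argument are: first, the cocycle relation $M_a M_v = M_{av}$ for the coefficient matrices, which follows from the extension rule $\delta(q_i, av) = \delta(\delta(q_i,a), v)$ — concretely, $(M_a M_v)_{ij} = \sum_{\ell} m_{a,i\ell}\, m_{v,\ell j}$ is $1$ exactly when there is a (unique) $\ell$ with $\delta(q_i,a) = q_\ell$ and $\delta(q_\ell, v) = q_j$, i.e. when $\delta(q_i, av) = q_j$; and second, the digit identity $[av]_k = a k^t + [v]_k$ for $a \in \Sigma_k$ and $v \in \Sigma_k^t$ (prepending the digit $a$ to a length-$t$ word shifts its value by $k^t$ and adds $a k^t$). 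Since every word $w \in \Sigma_k^{t+1}$ factors uniquely as $w = av$ with $a \in \Sigma_k$ and $v \in \Sigma_k^t$, the double sum collapses to $\sum_{w \in \Sigma_k^{t+1}} M_w x^{[w]_k}$, as desired.

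The argument for \eqref{eq:matrix_poly_2} is symmetric but requires care with the order of multiplication. Here I would write $M^R(k^{t+1};x) = M^R(k^t;x)\, M(x^{k^t})$, so that the new factor is appended on the right; applying the inductive hypothesis and the $t=1$ identity gives
\[
M^R(k^{t+1};x) = \Bigl(\sum_{v \in \Sigma_k^t} M_v x^{[v^R]_k}\Bigr)\Bigl(\sum_{a \in \Sigma_k} M_a x^{a k^t}\Bigr) = \sum_{v \in \Sigma_k^t}\sum_{a \in \Sigma_k} M_v M_a\, x^{[v^R]_k + a k^t}.
\]
Now $M_v M_a = M_{va}$ by the same cocycle relation, and one needs the digit identity $[(va)^R]_k = [a v^R]_k = a k^t + [v^R]_k$, which holds because $(va)^R = a v^R$ and $v^R$ has length $t$. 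Again each $w \in \Sigma_k^{t+1}$ is uniquely $w = va$ with $v \in \Sigma_k^t$, $a \in \Sigma_k$, so the sum becomes $\sum_{w \in \Sigma_k^{t+1}} M_w x^{[w^R]_k}$. The only genuinely delicate point — the one I would state as a separate sublemma if aiming for full rigour — is keeping the multiplication order of the matrix factors consistent with the direction in which digits are prepended or appended; everything else (the base case, the digit arithmetic, the unique factorization of words) is routine. Note also that Proposition \ref{prop:matrix_coef} can be recovered by reading off the $(i,j)$ entry of \eqref{eq:matrix_poly_1} and \eqref{eq:matrix_poly_2}, so one may instead deduce Proposition \ref{prop:matrix_poly} directly from Proposition \ref{prop:matrix_coef} together with the observation $M_w = [m_{w,ij}]$ and $\sum_{w \in \Sigma_k^t, \delta(q_i,w) = q_j} = \sum_{w \in \Sigma_k^t} m_{w,ij}$; but the self-contained induction above is cleaner.
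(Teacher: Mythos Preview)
Your proof is correct and follows essentially the same approach as the paper: induction on $t$, with the inductive step unwinding $M(k^{t+1};x) = M(x^{k^t}) M(k^t;x)$ and using the cocycle identity $M_a M_v = M_{av}$ together with $[av]_k = a k^t + [v]_k$. The only cosmetic difference is that the paper isolates the identity $M_{vw} = M_v M_w$ as a separate lemma (Lemma~\ref{lem:matrix_word}) rather than proving it inline, and then leaves the case of $M^R$ as ``analogous'' where you spell it out.
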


We also define for integers $n \geq 1$ and $t$ such that $k^{t-1}+1 \leq n \leq k^t$, the matrices
$$M(n;x) = \sum_{\underset{[w]_k\leq n-1}{w \in \Sigma_k^t}}  M_w x^{[w]_k}$$
to be the truncation of $M(k^t;x)$, viewed as a polynomial in $x$ (in the case of $M^R$ this will not be needed).

As we will see in the following sections, a recurrence relation of the desired form derived using $M(x)$ may not have minimal order. In some cases it is possible to construct another square matrix $\widehat{M}(x)$, which leads to a better result  in the above sense.
First, we associate with each  $q_i \in Q$ a finite state function $f_i \colon \Sigma_k^{*} \rightarrow \mathbb{C}$ given by
\begin{equation*} 
 f_i(w) = \uptau(\delta(q_i,w)).
\end{equation*}
We can think of $f_i(w)$ as the output that $\mathcal{A}$ would return at input $w \in  \Sigma_k^{*}$, if the initial state were changed to $q_i$.
Assume, after renumbering the states in $Q \setminus \{q_0\}$, that the set  $\{f_0, f_1, \ldots, f_c\}$ generates $\operatorname{span}_{\mathbb{C}} \{f_0, f_1, \ldots, f_{d-1}\}$, where $0 \leq c \leq d-1$. We stress that $f_0, f_1, \ldots, f_c$ do not have to be linearly independent. If $c=d-1$, we put $\widehat{M}(x) = M(x)$. Otherwise, we construct $\widehat{M}(x)$ as described below. 
For any integer $p$ such that $c < p \leq d-1$ write
\begin{equation} \label{eq:linear_dep}
f_p = \sum_{j=0}^{c} \alpha_{pj} f_j,
\end{equation}
 where $\alpha_{p0}, \ldots, \alpha_{pc} \in \mathbb{C}$.
We can now define $\widehat{M}(x) = [\widehat{m}_{ij}(x)]_{0 \leq i,j \leq c}$ of size $(c+1)\times (c+1)$, with entries
$$\widehat{m}_{ij}(x) = m_{ij}(x) +  \sum_{p =c+1}^{d-1} \alpha_{pj}m_{ip}(x).$$
In other words, to construct $\widehat{M}(x)$ we add for each $i = 0,1, \ldots,c$ the vector $ \sum_{p =c+1}^{d-1} m_{ip}(x) [\alpha_{p0}, \ldots, \alpha_{pc}]$ to the $i$th row of of $M(x)$  and delete the rows and columns with indices greater than $c$.

Observe that, unlike $M(x)$, the matrix $\widehat{M}(x)$ (its dimension, in particular) depends on the output of the $k$-DFAO considered.
The above construction raises the problem of finding linear dependence relations among the $f_i$, if any exist.
We discuss a possible solution and further implications in Section \ref{sec:Order} below.

Similarly as with $M(x)$, we may define for $t \geq 0$ the matrices
\begin{align*}
\widehat{M}(k^t;x) &=  \widehat{M}(x^{k^{t-1}})  \widehat{M}(x^{k^{t-2}}) \cdots  \widehat{M}(x^{k})  \widehat{M}(x), \\
\widehat{M}^{R}(k^t;x) &=  \widehat{M}(x) \widehat{M}(x^{k})\cdots  \widehat{M}(x^{k^{t-2}}) \widehat{M}(x^{k^{t-1}}).
\end{align*}
Take integers $n \geq 1$ and $t$ such that $k^{t-1}+1 \leq n \leq k^t$ and write
\[ \widehat{M}(k^t;x) = \sum_{m=0}^{k^t-1} \widehat{M}_m x^m,\]
where each $\widehat{M}_m$ is a square matrix of dimension $c+1$. Then we define 
\[ \widehat{M}(n;x) = \sum_{m=0}^{n-1} \widehat{M}_m x^m. \]

We illustrate the construction of $M(x)$ and $\widehat{M}(x)$ with two simple examples.
\begin{ex} \label{ex:RS1}
The Rudin--Shapiro sequence is both forward- and backward-induced by the $2$-DFAO displayed in Figure \ref{fig:RS}.
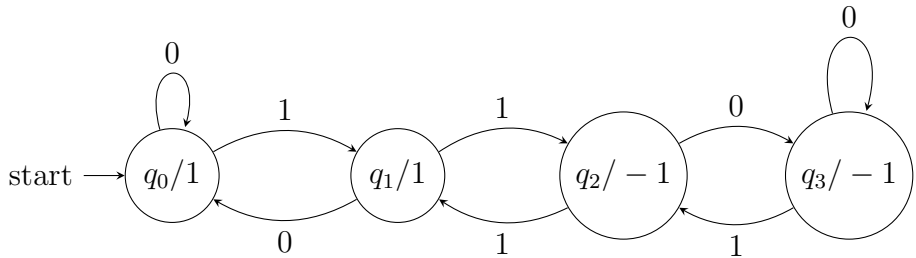
\begin{figure}[H]
\centering
\begin{tikzpicture}
\node[state, initial](q0){$q_0/1$};
\node[state, right of=q0](q1){$q_1/1$};
\node[state, right of=q1](q2){$q_2/-1$};
\node[state, right of=q2](q3){$q_3/-1$};

\draw
 (q0) edge[loop above] node{0} (q0)
 (q0) edge[bend left, above] node{1} (q1)
 (q1) edge[bend left, below] node{0} (q0)
 (q1) edge[bend left, above] node{1} (q2)
 (q2) edge[bend left, above] node{0} (q3)
 (q2) edge[bend left, below] node{1} (q1)
 (q3) edge[bend left, below] node{1} (q2)
 (q3) edge[loop above] node{0} (q3);

\end{tikzpicture}
\caption{A $2$-DFAO inducing the Rudin--Shapiro sequence}
\label{fig:RS}
\end{figure}
The associated polynomial matrix is
\[ M(x) = \begin{bmatrix}
1 & x & 0 & 0 \\
1 & 0 & x & 0 \\
0 & x & 0 & 1 \\
0 & 0 & x & 1 \\
\end{bmatrix}. \]
As in the previous example, it is not hard to check that $f_2=-f_1$ and $f_3=-f_0$. The choice $c=1$ yields the matrix
\[\widehat{M}(x) = \begin{bmatrix}
1 & x  \\
1 & -x 
\end{bmatrix}. \]
Not coincidentally, the relation \eqref{eq:RS_def} defining the Rudin--Shapiro polynomials can be written as
\[
\begin{bmatrix} P_{n+1}(x) \\  Q_{n+1}(x) \end{bmatrix} = \widehat{M}(x^{2^n}) \begin{bmatrix}  P_n(x)  \\ Q_n(x) \end{bmatrix}.
\]
A similar type of a polynomial recurrence relation is derived in Proposition \ref{prop:poly_recur} of Section \ref{sec:Recurrence} for the polynomials associated with any automatic sequence.
\end{ex}

\begin{ex} \label{ex:BS1}
Let $\{b(n)\}_{n \geq 0}$ be the Baum--Sweet sequence, given by $b(n) = 1$ if $(n)_2$ contains no block of zeros of odd length, and $b(n) = 0$ otherwise. This is a $2$-automatic sequence backward-induced by the $2$-DFAO displayed in Figure \ref{fig:BS}.
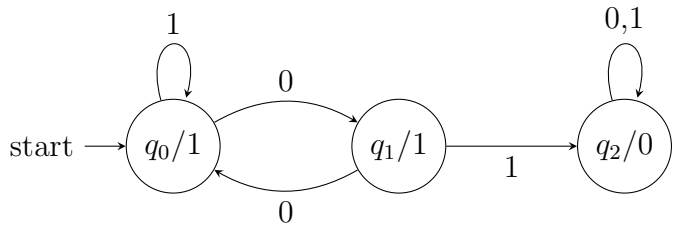
\begin{figure}[H]
\centering
\begin{tikzpicture}
\node[state, initial](q0){$q_0/1$};
\node[state, right of=q0](q1){$q_1/1$};
\node[state, right of=q1](q2){$q_2/0$};

\draw
 (q0) edge[loop above] node{1} (q0)
 (q0) edge[bend left, above] node{0} (q1)
 (q1) edge[bend left, below] node{0} (q0)
 (q1) edge[below] node{1} (q2)
 (q2) edge[loop above] node{0,1} (q2);

\end{tikzpicture}
\caption{A $2$-DFAO backward-inducing the Baum--Sweet sequence}
\label{fig:BS}
\end{figure}
The associated polynomial matrix is
$$ 
M(x) = \begin{bmatrix}
x & 1 & 0  \\
1 & 0 & x  \\
0 & 0 & 1+x \\
\end{bmatrix}.
$$
However, the finite-state function $f_2$ is constantly $0$, thus we can consider the matrix
$$ 
\widehat{M}(x) = \begin{bmatrix}
x & 1   \\
1 & 0   \\
\end{bmatrix},
$$
corresponding to $f_0, f_1$.
\end{ex}

\section{Recurrence relations at roots of unity}\label{sec:Recurrence}

Let $\{a(n)\}_{n \geq 0}$ be a $k$-automatic sequence with values in $\mathbb{C}$ and define the polynomials
$$ A(n;x) = \sum_{m=0}^{n-1}a(m) x^m, $$
similarly as in Section \ref{sec:Introduction}.
Let $\omega$ be an $r$th root of unity (not necessarily primitive) with $r$ and $k$ relatively prime and let $s \geq 1$ be an integer such that $k^s \equiv 1 \pmod{r}$.
The main goal of this section is to establish a recurrence relation, valid for all $n \geq 1$, which is of the form
\begin{equation} \label{eq:recurrence_form}
\sum_{m=0}^{l}C_m(\omega) A(k^{ms}n;\omega) = 0, 
\end{equation}
where $C_0, \ldots, C_l \in \mathbb{C}[x]$ depend only on $s$, with $C_l(x)$ nonzero.

Let $\mathcal{A} = (Q,\Sigma_k,\delta,q_0,\Delta,\uptau)$ be a $k$-DFAO inducing $\{a(n)\}_{n \geq 0}$. 
We retain the notation introduced in Section \ref{sec:Matrix}.
To begin with, we show a general polynomial recurrence relation involving $A(n;x)$, whose form depends on whether $\mathcal{A}$ computes $\{a(n)\}_{n \geq 0}$ by reading the input starting from the most or the least significant digit. 
With each state $q_i \in Q$ we associate two sequences of polynomials $F_i(n;x),  F_i^R(n;x)$, where for positive integers $n \geq 1$ and $t$ such that $k^{t-1}+1 \leq n \leq k^{t}$, we define
\begin{align*}
 F_i(n;x) &= \sum_{\underset{ [w]_k \leq  n-1}{w \in \Sigma_k^{t}}} f_i(w) x^{[w]_k}, \\
 F_i^R(n;x) &= \sum_{\underset{ [w]_k \leq  n-1}{w \in \Sigma_k^{t}}} f_i(w^R) x^{[w]_k}.
\end{align*}
It is clear that if $\mathcal{A}$ forward-induces   $\{a(n)\}_{n \geq 0}$, then $F_0(n;x) = A(n;x)$. Similarly if $\mathcal{A}$ backward-induces  $\{a(n)\}_{n \geq 0}$, then $F_0^R(n;x) = A(n;x)$.

Assume that the set $\{f_0, f_1, \ldots, f_c\}$ generates $\operatorname{span}_{\mathbb{C}} \{f_0, f_1, \ldots, f_{d-1}\}$ and the matrix $\widehat{M}(x)$ corresponds to this set.
Consider the column vectors
\begin{align*}
\widehat{F}(n;x) &= \begin{bmatrix} F_0(n;x) \cdots F_{c}(n;x) \end{bmatrix}^T, \\
\widehat{F}^R(n;x) &= \begin{bmatrix} F^R_0(n;x) \cdots F^R_{c}(n;x) \end{bmatrix}^T.
\end{align*}
We have the following polynomial recurrence relations.

\begin{prop} \label{prop:poly_recur}
For all integers $u \geq 1$ and $n \geq 1$ we have
\begin{align}
\widehat{F}(k^un;x) &=   \widehat{M}(n; x^{k^u}) \widehat{F}(k^u;x), \label{eq:recur_left} \\
\widehat{F}^R(k^un;x) &= \widehat{M}^R(k^u;x) \widehat{F}^R(n;x^{k^u})\label{eq:recur_right}.
\end{align}
\end{prop}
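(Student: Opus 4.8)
The plan is to prove both identities by splitting words of length $t' = t + u$ (where $k^{t-1}+1 \le n \le k^t$, so that $k^{t'-1}+1 \le k^u n \le k^{t'}$) according to the position of the split relative to the forward/backward reading direction, and then to pass from the matrices $M(x)$, $M_w$ to the reduced matrices $\widehat{M}(x)$, $\widehat{M}_m$ using the linear dependence relations \eqref{eq:linear_dep}.

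For \eqref{eq:recur_left}, I would first establish the analogous statement with $M$ in place of $\widehat{M}$: writing $F(n;x) = [F_0(n;x) \cdots F_{d-1}(n;x)]^T$, I claim that
\[ F(k^u n; x) = M(n; x^{k^u}) F(k^u; x). \]
To see this, expand the left-hand side over words $v \in \Sigma_k^{t'}$ with $[v]_k \le k^u n - 1$, and write $v = v' v''$ with $v' \in \Sigma_k^t$, $v'' \in \Sigma_k^u$. The condition $[v]_k \le k^u n - 1$ is equivalent to $[v']_k \le n-1$ (after accounting for the leading block), and $[v]_k = k^u [v']_k + [v'']_k$, so $x^{[v]_k} = (x^{k^u})^{[v']_k} x^{[v'']_k}$. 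Moreover $f_i(v) = f_i(v' v'') = f_{j}(v'')$ where $q_j = \delta(q_i, v')$, i.e. $f_i(v) = \sum_j m_{v',ij} f_j(v'')$. Summing over $v'$ and $v''$ and comparing with the definitions of $M(n;x)$ via $M_{v'}$ (Proposition \ref{prop:matrix_poly} truncated) and of $F_j(k^u;x)$, the identity follows coordinate-wise. Symmetrically, for \eqref{eq:recur_right} one splits $v = v'' v'$ with $v'' \in \Sigma_k^u$ first, so that $[v^R]_k = [v''^R]_k + k^u [v'^R]_k$ and the order of the matrix factors is reversed; this uses the definition of $M^R(k^u;x)$ and $F_i^R$, together with $f_i(v^R) = f_i((v''v')^R) = f_i(v'^R v''^R)$ and the cocycle identity $\delta(q_i, v'^R v''^R) = \delta(\delta(q_i, v'^R), v''^R)$, giving $f_i(v^R) = \sum_j m_{v''^R, \delta(q_i,v'^R)\, j}\cdot(\text{something})$; I would organize this as $F^R(k^u n; x) = M^R(k^u;x) F^R(n; x^{k^u})$ with the full $d\times d$ matrix.

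The second step is the reduction from $M$ to $\widehat{M}$. By the renumbering assumption, $\{f_0,\dots,f_c\}$ spans all the $f_i$, and by \eqref{eq:linear_dep} we have $f_p = \sum_{j=0}^c \alpha_{pj} f_j$ for $p > c$; consequently $F_p(n;x) = \sum_{j=0}^c \alpha_{pj} F_j(n;x)$ and likewise for $F_p^R$, since both are $\mathbb{C}$-linear in the finite-state function being summed against the fixed monomials $x^{[w]_k}$. Substituting these expressions into the $d$-dimensional identities collapses the last $d-1-c$ coordinates into the first $c+1$, and the row operation defining $\widehat{m}_{ij}(x) = m_{ij}(x) + \sum_{p>c}\alpha_{pj} m_{ip}(x)$ is exactly what records this collapse on the matrix side — so the $(c+1)$-dimensional identity with $\widehat{M}$ drops out. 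I should check that the same linear relation with the \emph{same} coefficients $\alpha_{pj}$ passes through the substitution $x \mapsto x^{k^u}$, which is immediate since the $\alpha_{pj}$ are constants, and that it is compatible with the matrix product $\widehat{M}(k^u;x)$, $\widehat{M}^R(k^u;x)$ — this follows by induction on $u$ from the single-step case, or more cleanly by applying the already-proven identity at $n$ with $\widehat M(k^u;x)$ playing the role of $\widehat M(n;\cdot)$ once we know $\widehat M(k^u;x)$ is the reduced form of $M(k^u;x)$.

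The main obstacle I anticipate is bookkeeping the truncation condition "$[w]_k \le n-1$" correctly under the word-splitting, especially reconciling it with the requirement that all words have the \emph{fixed} length $t'$ (padding with leading zeros). One must verify that for $v = v'v''$ of length $t' = t+u$ the inequality $[v]_k \le k^u n - 1$ depends only on the length-$t$ prefix $v'$ through $[v']_k \le n-1$, uniformly, and that the resulting double sum really does factor as a matrix-vector product with the truncated matrix $M(n;x^{k^u})$ rather than $M(k^t;x^{k^u})$. A symmetric subtlety appears in \eqref{eq:recur_right}, where the truncation lives on the low-order side; there no truncation of the matrix is needed (as the paper notes), but one must be careful that $F^R(n;\cdot)$ — not $F^R(k^t;\cdot)$ — is what appears, and that the exponents $[v^R]_k = [v''^R]_k + k^u[v'^R]_k$ line up with $\widehat M^R(k^u;x)$ acting on the left. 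Once the index manipulations are set up cleanly, everything else is routine linear algebra.
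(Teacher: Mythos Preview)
Your outline is close to the paper's, but the order of operations hides a real gap in the reduction step. The paper first proves the unhatted identity only at powers $n=k^t$ (your word-splitting argument specialised there), then collapses \emph{only the single step} $F(k^{u+1};x)=M(x^{k^u})F(k^u;x)$ to get $\widehat F(k^{u+1};x)=\widehat M(x^{k^u})\widehat F(k^u;x)$, iterates this hatted relation to reach $\widehat F(k^{u+t};x)=\widehat M(k^t;x^{k^u})\widehat F(k^u;x)$, and only then truncates at degree $<k^u n$. Your plan instead proves the unhatted identity for general $n$ and collapses in one shot; this yields
\[
\widehat F(k^u n;x)=\bigl([I_{c+1}\ 0]\,M(n;x^{k^u})\,A\bigr)\,\widehat F(k^u;x),\qquad A=\begin{bmatrix}I_{c+1}\\ \alpha\end{bmatrix},
\]
and you still have to identify the bracketed matrix with $\widehat M(n;x^{k^u})$ as defined. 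That identification would follow from the intertwining $M(x)A=A\widehat M(x)$, which one can check holds when $f_0,\dots,f_c$ are linearly independent, but the paper explicitly does \emph{not} assume this; with a dependent generating set the $\alpha_{pj}$ are not unique and the intertwining can fail, so $[I\ 0]M(k^t;x)A$ need not equal $\widehat M(k^t;x)$. Your own aside (``induction \dots\ from the single-step case'') is exactly the correct fix and is what the paper does --- just note that the induction is on $t$ (the number of factors in $\widehat M(k^t;x^{k^u})$) for \eqref{eq:recur_left} and on $u$ for \eqref{eq:recur_right}, and that the truncation comes last, after the hatted power identity is established.

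One smaller bookkeeping point: for \eqref{eq:recur_right} the split should again be $v=v'v''$ with $|v'|=t$, $|v''|=u$, so that the truncation $[v]_k\le k^u n-1$ reduces to $[v']_k\le n-1$; then $v^R=(v'')^R(v')^R$ gives $f(v^R)=M_{(v'')^R}f((v')^R)$, producing $M^R(k^u;x)$ on the left and $F^R(n;x^{k^u})$ on the right. Your split $v=v''v'$ with $|v''|=u$ does not make the truncation condition factor.
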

 Note that in the expressions on the right hand side of \eqref{eq:recur_left} and \eqref{eq:recur_right}, the pairs of arguments $(n;x^{k^u})$ and $(k^u;x)$ essentially switch roles. 
%


Now, fix $r \geq 1$ and let $\omega$ be any $r$th root of unity. Take $s \geq 1$  such that $k^s \equiv 1 \pmod{r}$.
We are ready to state the main result in this section.

\begin{thm} \label{thm:recur}
Let $C(x,y) \in \mathbb{C}[x,y]$ and write
$$C(x,y) = \sum_{m=0}^{l} C_m(x) y^m.$$
Assume that one of the following conditions holds:
\begin{enumerate}[label={\textup{\arabic*.}}]
\item $\mathcal{A}$ forward-induces $\{a(n)\}_{n \geq 0}$ and $C(\omega, \widehat{M}(k^s;\omega)) = 0,$
\item $\mathcal{A}$ backward-induces $\{a(n)\}_{n \geq 0}$ and $C(\omega, \widehat{M}^R(k^s;\omega)) = 0.$
\end{enumerate}
Then for all $n \geq 1$ we have
$$\sum_{m=0}^{l} C_m(\omega) A(k^{ms}n;\omega) = 0.$$
\end{thm}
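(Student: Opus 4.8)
The plan is to reduce the statement to Proposition~\ref{prop:poly_recur} by iterating the polynomial recurrence and then substituting $x = \omega$. I will treat the forward case in detail; the backward case is entirely symmetric.

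First I would observe that it suffices to prove the vector identity
\[
\sum_{m=0}^{l} C_m(\omega)\, \widehat{F}(k^{ms}n;\omega) = 0
\]
for all $n \geq 1$, since in the forward case the first coordinate of $\widehat{F}(k^{ms}n;x)$ is exactly $A(k^{ms}n;x)$, by the remark preceding Proposition~\ref{prop:poly_recur} (note $F_0(N;x) = A(N;x)$ when $\mathcal{A}$ forward-induces $\{a(n)\}$). So the scalar conclusion is just the zeroth component of the vector conclusion.

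Next I would unwind the recurrence \eqref{eq:recur_left}. Applied with $u = s$ and with $k^{(m-1)s}n$ in place of $n$, it gives $\widehat{F}(k^{ms}n;x) = \widehat{M}(k^{(m-1)s}n;\, x^{k^s})\,\widehat{F}(k^s;x)$. The cleaner route, however, is to iterate it $m$ times: a straightforward induction on $m$ using \eqref{eq:recur_left} and the multiplicativity $\widehat{M}(k^{a+b};x) = \widehat{M}(k^a; x^{k^b})\widehat{M}(k^b;x)$ yields, for $n$ in the appropriate range and $m \geq 1$,
\[
\widehat{F}(k^{ms}n;x) = \widehat{M}(n;\, x^{k^{ms}})\, \widehat{M}(k^s; x^{k^{(m-1)s}}) \widehat{M}(k^s; x^{k^{(m-2)s}}) \cdots \widehat{M}(k^s; x^{k^{s}})\, \widehat{F}(k^s;x).
\]
Now substitute $x = \omega$. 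Since $k^s \equiv 1 \pmod r$ and $\omega^r = 1$, we have $\omega^{k^{js}} = \omega$ for every $j \geq 0$; hence every factor $\widehat{M}(k^s;\omega^{k^{js}})$ collapses to the single matrix $\widehat{M}(k^s;\omega)$, and $\widehat{M}(n;\omega^{k^{ms}}) = \widehat{M}(n;\omega)$. Writing $B = \widehat{M}(k^s;\omega)$, this gives
\[
\widehat{F}(k^{ms}n;\omega) = \widehat{M}(n;\omega)\, B^{m}\, \widehat{F}(k^s;\omega)
\]
for all $m \geq 1$, and the same holds for $m = 0$ (with $B^0$ the identity) since then the left side is $\widehat{F}(n;\omega)$ and $\widehat{M}(n;\omega)\widehat{F}(k^s;\omega) = \widehat{F}(k^s n;\omega)$... wait — one must double-check the $m=0$ endpoint separately; in fact for $m=0$ the claim is the trivial identity $\widehat{F}(n;\omega) = \widehat{F}(n;\omega)$, so I would simply phrase the iteration to produce $\widehat{F}(k^{ms}n;\omega) = \widehat{M}(n;\omega) B^m \widehat{F}(k^s;\omega)$ for $m \geq 1$ and handle $m=0$ by noting $\widehat{F}(k^{0}n;\omega) = \widehat{F}(n;\omega)$ while $\widehat{M}(n;\omega)B^0\widehat{F}(k^s;\omega)$ need not equal it, so instead I would factor out differently: apply \eqref{eq:recur_left} once at the outermost step only when $m \geq 1$ and keep $n=1$'s role flexible, or — cleanest — set $n' = k^s n$ and write everything in terms of $B^{m}\widehat{F}(n;\omega)$-type expressions. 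The robust formulation is: for $m \geq 0$, $\widehat{F}(k^{ms}\cdot k^s n;\omega) = \widehat{M}(k^s n;\omega) B^{m}\widehat{F}(k^s;\omega)$, equivalently $\widehat{F}(k^{(m+1)s}n;\omega) = \widehat{M}(k^s n;\omega) B^{m}\widehat{F}(k^s;\omega)$, and reindex.

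Granting the identity $\widehat{F}(k^{ms}n;\omega) = \widehat{M}(n;\omega)\, B^{m}\, \widehat{F}(k^s;\omega)$ in a correctly bookkept form, the conclusion is immediate:
\[
\sum_{m=0}^{l} C_m(\omega)\, \widehat{F}(k^{ms}n;\omega) = \widehat{M}(n;\omega)\!\left(\sum_{m=0}^{l} C_m(\omega) B^{m}\right)\! \widehat{F}(k^s;\omega) = \widehat{M}(n;\omega)\, C(\omega, B)\, \widehat{F}(k^s;\omega) = 0,
\]
since $C(\omega, B) = C(\omega, \widehat{M}(k^s;\omega)) = 0$ by hypothesis~1. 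Taking the zeroth coordinate gives $\sum_{m=0}^l C_m(\omega) A(k^{ms}n;\omega) = 0$. The backward case runs identically, using \eqref{eq:recur_right} instead: there the matrix product accumulates on the left as $\widehat{M}^R(k^s;x)$ factors, each of which becomes $\widehat{M}^R(k^s;\omega)$ after substitution because the $x^{k^{js}}$ arguments again collapse, and $F_0^R(N;x) = A(N;x)$ gives the scalar conclusion.

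The main obstacle I anticipate is purely bookkeeping: getting the iteration of \eqref{eq:recur_left} right, in particular tracking which matrix gets the arguments $(n; x^{k^u})$ versus $(k^u; x)$ at each stage (the paper explicitly warns these "switch roles"), confirming the range condition $k^{t-1}+1 \le k^{ms}n \le k^t$ is respected so that $\widehat{M}(\,\cdot\,;\,\cdot\,)$ is the correct truncation at every step, and handling the $m=0$ boundary term cleanly. The genuinely substantive inputs — Proposition~\ref{prop:poly_recur}, the definition of $\widehat{F}$ with $F_0 = A$, and the collapse $\omega^{k^{s}} = \omega$ — are each one-line observations; no estimates or deeper structure are needed.
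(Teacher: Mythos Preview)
Your approach is essentially identical to the paper's: reduce to the vector identity, use periodicity $\omega^{k^{js}}=\omega$ to collapse the matrix product to a power of $B=\widehat{M}(k^s;\omega)$, and apply $C(\omega,B)=0$. The only place you get tangled is the $m=0$ bookkeeping, and that is resolved by one change: use $\widehat{F}(1;\omega)$ rather than $\widehat{F}(k^s;\omega)$ as the base vector. Proposition~\ref{prop:poly_recur} (or rather its proof, equation~\eqref{eq:recur_left_3} with $u=0$, together with truncation) gives $\widehat{F}(N;x)=\widehat{M}(N;x)\widehat{F}(1;x)$ for every $N\ge 1$, so after substitution
\[
\widehat{F}(k^{ms}n;\omega)=\widehat{M}(k^{ms}n;\omega)\,\widehat{F}(1;\omega)=\widehat{M}(n;\omega)\,B^{m}\,\widehat{F}(1;\omega)
\]
holds for \emph{all} $m\ge 0$, with no endpoint case to handle separately. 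This is exactly how the paper organizes it: first observe $\widehat{F}(k^{ms};\omega)=B^{m}\widehat{F}(1;\omega)$, sum against $C_m(\omega)$ to get zero, and then left-multiply by $\widehat{M}(n;\omega)$ (using $\omega^{k^{ms}}=\omega$) to pass from $k^{ms}$ to $k^{ms}n$.
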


Observe that in Theorem \ref{thm:recur} we can always choose a polynomial $C(x,y)$ which only depends on $\omega$ through $s$, in the following sense: for $s \geq 1$  fixed, one of the conditions in Theorem \ref{thm:recur} is satisfied for all $r$th roots of unity $\omega$, whenever $r$ divides $k^s-1$.  Indeed, if $\mathcal{A}$ forward-induces $\{a(n)\}_{n \geq 0}$, then it suffices to take $C(x,y)$ to be the characteristic polynomial of $\widehat{M}(k^s;x)$ over the field of rational functions $\mathbb{C}(x)$. Similarly if $\mathcal{A}$ backward-induces $\{a(n)\}_{n \geq 0}$, then one can take the characteristic polynomial of $\widehat{M}^R(k^s;x)$. We illustrate Theorem \ref{thm:recur} by continuing the two examples of the previous section.

\begin{ex} \label{ex:RS2}
We consider the polynomials $R(n; x) = \sum_{m=0}^{n-1} r(m)x^m$, corresponding to the Rudin--Shapiro sequence. If $n=2^u$, then $R(n;x)$ coincides with the Rudin--Shapiro polynomial $P_u(x)$.

Let $\omega^3=1$ and $s=2$ so that $2^s \equiv 1 \pmod{3}$. To obtain a recurrence relation involving $R(n;\omega)$, we will use the matrix $\widehat{M}(x)$ already constructed in Example \ref{ex:RS1}.
 The characteristic (and minimal) polynomial of $\widehat{M}(2^2;x)$ is
 \begin{align*}
 C(x,\lambda) = \lambda^2 -(x^3+x^2+x+1) \lambda + 4x^3,
 \end{align*}
 so $C(1,\lambda) = \lambda^2 -4\lambda+4$ and $C(\omega,\lambda) = \lambda^2 - \lambda^+4$, when $\omega$ is a primitive $3$rd root of unity. Theorem \ref{thm:recur} gives for all $n \geq 1$ the relations
 \begin{align*}
 R(2^4n;1)-4R(2^2n;1)+4R(n;1)&=0,  \nonumber \\
 R(2^4n;\omega)-R(2^2n;\omega)+4R(n;\omega)&=0.  
\end{align*}
We can similarly derive a recurrence relation involving an $r$th root of unity $\omega$ for any odd $r$ and appropriate $s$. It is straigforward to check that $\det (\widehat{M}^R(2^s;x)) = (-2)^s x^{2^s-1}$, thus by considering the characteristic polynomial of $\widehat{M}^R(k^s;x)$ we obtain a recurrence relation of the form
\[R(2^{2s}n;\omega) - C_s(\omega) R(2^sn;\omega)+(-2)^s R(n;\omega) = 0, \]
where $C_s(x) = \operatorname{tr} (\widehat{M}(2^s;x))$ is a polynomial with integer coefficients.
This improves the result Theorem \ref{thm:RS_recur} in the sense that the obtained recurrence relation works also for $n$ other than powers of $2$.  

A similar procedure, starting with the matrix $M(x)$, only yields a $5$-term recurrence relation. On the other hand, it holds regardless of the output of the $2$-DFAO.
%
\end{ex}

\begin{ex} \label{ex:BS2}
Consider the polynomials  $B(n; x) = \sum_{m=0}^{n-1} b(m)x^m$, associated with the Baum--Sweet sequence.
 Using the matrix $\widehat{M}(x)$ constructed in Example \ref{ex:BS1}, we find that for all $n \geq 1$
\[B(2^{2s}n;\omega) - C_s(\omega) B(2^sn;\omega)+(-1)^s B(n;\omega) = 0, \]
where $C_s(x) = \operatorname{tr} (\widehat{M}^R(2^s;x))$.
\end{ex}

\section{The order of the recurrence} \label{sec:Order}
The examples in the previous section show that the minimal order $l$ of the recurrence relation of the form \eqref{eq:recurrence_form} can be bounded from above by the dimension of $\operatorname{span}_{\mathbb{C}}\{f_0, \ldots, f_{d-1}\}$, which in turn is at most the number of states in a $k$-DFAO inducing the sequence $\{a(n)\}_{n\geq 0}$. In this section we make these observations more precise and discuss a method to find linear dependence relations among $f_0, \ldots, f_{d-1}$. We apply the results to a certain class of pattern counting sequences. Before stating any results we give a simple example, which demonstrates that $l$ might depend on the choice of $r$ and $\omega$.

\begin{ex}\label{ex:TM2}
Consider a variant of the Thue--Morse sequence
$$\widetilde{t}(n) = s_2(n) \bmod{2},$$
which is the image of $\{t(n)\}_{n \geq 0}$ under the coding $1 \mapsto 0, -1 \mapsto 1$. Define the polynomials
$$\widetilde{T}(n;x) = \sum_{m=0}^{n-1} \widetilde{t}(m) x^m.$$
Clearly, the values $\widetilde{T}(n;\omega)$ satisfy a three-term recurrence relation of the form \eqref{eq:recurrence_form}, and it is not immediately clear whether or not the number of terms can be reduced, like in the case of usual Thue--Morse polynomials. Observe that $2\widetilde{t}(n) = 1- t(n)$ for any $n \geq 1$, thus
$$2 \widetilde{T}(n;x) = \frac{x^n-1}{x-1} - T(n;x).$$
Denote $C = T(2^s;\omega)$ for simplicity of notation. We have for all $n \geq 1$
\begin{align*}
\widetilde{T}(2^{s} n; \omega) &= \frac{1}{2}\left(  \frac{\omega^n-1}{\omega-1} - T(2^{s} n; \omega)\right) = \frac{1}{2}\left(  \frac{\omega^n-1}{\omega-1} - C T( n; \omega)\right) \\
&= \frac{1}{2}\left(  \frac{\omega^n-1}{\omega-1} - C \left(\frac{\omega^n-1}{\omega-1} - 2\widetilde{T}(n;\omega) \right)\right) \\ &= C \widetilde{T}(n;\omega) +\frac{(1-C)(\omega^n-1)}{2(\omega-1)}.
\end{align*}
Therefore, if there exists a two-term recurrence relation of the form
$$\widetilde{T}(2^{s} n; \omega) = \widetilde{C} \widetilde{T}(n;\omega),$$
valid for all $n \geq 1$, then we must have $\widetilde{C}=C=1$. Conversely, if $C=1$, then $\widetilde{T}(2^{s} n; \omega) =  \widetilde{T}(n;\omega)$ for all $n \geq 1$. We have seen in Section \ref{sec:TM} that in this case $r$ must have at least two distinct prime factors, but it is hard to determine in general for which $r$ we have $T(2^s;\omega)=1$.
\end{ex}

Example \ref{ex:TM2} discourages us from considering the minimal number of terms in \eqref{eq:recurrence_form} for each $r$ and $\omega$ individually. Indeed, even for the fairly simple sequence $\{\widetilde{t}(n)\}_{n\geq 0}$ it seems very difficult to determine the answer without direct calculation.

Therefore, for a $k$-automatic sequence $\{a(n)\}_{n \geq 0}$, it seems reasonable to consider a global bound on the minimal number of terms in the recurrence \eqref{eq:recurrence_form}, independent of $r,s$ and $\omega$. More precisely, we consider the minimal $l \geq 1$ such that for each $r \geq 1, r$th root of unity $\omega$ and $s \geq 1$ such that $k^s \equiv 1 \pmod{r}$ there exists a recurrence relation of the form  \eqref{eq:recurrence_form} satisfied for all $n \geq 1$, where $C_0, \ldots, C_l \in \mathbb{C}[x]$ depend only on $s$, with $C_l$ nonzero. We denote this number, which depends solely on the sequence $\{a(n)\}_{n \geq 0}$,  by $l_{\min}$. Our aim is to obtain a bound on $l_{\min}$, relying on the properties of an automaton inducing $\{a(n)\}_{n \geq 0}$.
As we have already observed, such a bound can immediately be obtained from Theorem \ref{thm:recur}.  More precisely, we have the following result.
\begin{prop} \label{prop:simple_bound}
Assume that the sequence $\{a(n)\}_{n \geq 0}$ is (forward- or backward-) induced by a $k$-DFAO $\mathcal{A}$ with $d$ states. Then 
\[l_{\min} \leq \dim(\operatorname{span}_{\mathbb{C}}\{f_0, \ldots, f_{d-1}\}).\]
In particular, $l_{\min} \leq d$.
\end{prop}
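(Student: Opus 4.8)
The plan is to invoke Theorem \ref{thm:recur} directly, using the characteristic polynomial of the appropriate $s$-fold product matrix as the polynomial $C(x,y)$. Fix $r \geq 1$, an $r$th root of unity $\omega$ (with $\gcd(r,k)=1$), and $s \geq 1$ with $k^s \equiv 1 \pmod r$. Let $\widehat{M}(x)$ be the $(c+1)\times(c+1)$ matrix constructed in Section \ref{sec:Matrix} from a generating subset $\{f_0,\ldots,f_c\}$ of $\operatorname{span}_{\mathbb{C}}\{f_0,\ldots,f_{d-1}\}$, so that $c+1 = \dim(\operatorname{span}_{\mathbb{C}}\{f_0,\ldots,f_{d-1}\}) =: D$.

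First, suppose $\mathcal{A}$ forward-induces $\{a(n)\}_{n\geq 0}$. Consider $N(x) := \widehat{M}(k^s;x)$, a $D \times D$ matrix over the field $\mathbb{C}(x)$ of rational functions, and let $\chi(x,y) = \det(y\,I - N(x)) \in \mathbb{C}(x)[y]$ be its characteristic polynomial; it is monic of degree $D$ in $y$. Clearing denominators (multiplying by a suitable nonzero polynomial in $x$, which does not affect the vanishing at $y = \widehat{M}(k^s;\omega)$ as long as that denominator is nonzero at $x=\omega$ — and one should note the entries of $\widehat{M}(k^s;x)$ are polynomials, so $\chi$ already lies in $\mathbb{C}[x,y]$), we obtain $C(x,y) \in \mathbb{C}[x,y]$ with $\deg_y C = D$ and leading coefficient $C_D(x)$ nonzero. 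By the Cayley--Hamilton theorem, $\chi(\omega, \widehat{M}(k^s;\omega)) = 0$, hence $C(\omega, \widehat{M}(k^s;\omega)) = 0$. Condition 1 of Theorem \ref{thm:recur} is then satisfied, so $\sum_{m=0}^{D} C_m(\omega) A(k^{ms}n;\omega) = 0$ for all $n \geq 1$, a recurrence of the form \eqref{eq:recurrence_form} with $l = D$ and coefficients depending only on $s$ (since $C(x,y)$ is built from $\widehat{M}(k^s;x)$, which depends only on $s$ and $\mathcal{A}$, not on $r$ or $\omega$). If instead $\mathcal{A}$ backward-induces the sequence, the identical argument applies with $\widehat{M}^R(k^s;x)$ in place of $\widehat{M}(k^s;x)$ and Condition 2 of Theorem \ref{thm:recur}. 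In either case this shows $l_{\min} \leq D = \dim(\operatorname{span}_{\mathbb{C}}\{f_0,\ldots,f_{d-1}\})$. The bound $l_{\min} \leq d$ follows since $\operatorname{span}_{\mathbb{C}}\{f_0,\ldots,f_{d-1}\}$ is spanned by $d$ vectors.

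There is essentially no serious obstacle here; the work has been done in Theorem \ref{thm:recur}. The only points requiring care are bookkeeping ones: checking that the characteristic polynomial indeed has polynomial (not merely rational) coefficients in $x$ — which holds because the entries of $\widehat{M}(k^s;x)$ are polynomials, so no clearing of denominators is actually needed and $C_D(x) = 1$ is the constant polynomial, in particular nonzero — and confirming that the leading coefficient $C_l$ is nonzero as required by the definition of $l_{\min}$, which is immediate from monicity. One should also remark that when $c = d-1$ (no linear dependences), $\widehat{M}(x) = M(x)$ and $D = d$, recovering the cruder bound; the refinement to $D \leq d$ is exactly the content of passing to $\widehat{M}$.
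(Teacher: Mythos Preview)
Your argument is correct and matches the paper's own reasoning: the paper does not give a separate proof of this proposition but treats it as immediate from Theorem~\ref{thm:recur} together with the remark (just after that theorem) that one may take $C(x,y)$ to be the characteristic polynomial of $\widehat{M}(k^s;x)$ or $\widehat{M}^R(k^s;x)$ over $\mathbb{C}(x)$. The only small caveat is that the paper's construction of $\widehat{M}(x)$ allows the generating set $\{f_0,\ldots,f_c\}$ to be linearly dependent, so to get $c+1 = \dim(\operatorname{span}_{\mathbb{C}}\{f_0,\ldots,f_{d-1}\})$ you are implicitly choosing a basis containing $f_0$; this is possible whenever $f_0 \neq 0$, and the remaining case $f_0 = 0$ (hence $A(n;x)\equiv 0$) is trivial.
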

In order to achieve the best possible bound, we may first remove all inaccessible states from $\mathcal{A}$. Note that the bound $l_{\min} \leq d$, while in general not optimal, is applicable regardless of the output of $\mathcal{A}$. 
Unfortunately, we have neither been able to find an example where the inequality of Proposition \ref{prop:simple_bound} is sharp, nor managed to prove that equality holds. We therefore ask the following question.
\begin{que} \label{que:min_order}
Assume that $\mathcal{A}$ has no inaccessible states. Does the equality 
\[l_{\min} = \dim(\operatorname{span}_{\mathbb{C}}\{f_0, \ldots, f_{d-1}\})\]
hold?
\end{que}
In its statement question deliberately did not specify whether $\mathcal{A}$ forward- or backward-induces $\{a(n)\}_{n \geq 0}$. In fact, we believe that such an assumption does not affect the answer, since the construction described in Section \ref{sec:Matrix} and the result of Theorem \ref{thm:recur} are almost identical in both cases. This raises another question, which seems interesting in its own right. 
\begin{que}\label{que:dim_finite_state}
Let $f_0, \ldots, f_{d-1}$ be the finite-state functions corresponding to the accessible states of $\mathcal{A}$. Let $\mathcal{B}$ be a $k$-DFAO such that  the finite-state function $g_0 = f_0^R$ corresponds to its initial state. Let $g_0, \ldots, g_{e-1}$ denote the finite-state functions corresponding to the accessible states of $\mathcal{B}$. Does the equality
\[\dim(\operatorname{span}_{\mathbb{C}}\{f_0, \ldots, f_{d-1}\}) = \dim(\operatorname{span}_{\mathbb{C}}\{g_0, \ldots, g_{e-1}\})\]
hold?
\end{que}
Observe that an affirmative answer to Question \ref{que:min_order} would immediately give an affirmative answer to Question \ref{que:dim_finite_state} in the case when the considered automata forward- and backward-induce $\{a(n)\}_{n \geq 0}$, respectively. 

We will now give a straightforward approach to determine linear dependence relations among the finite-state functions $f_0, \ldots,
 f_{d-1}$. Let $S \subset Q^d$ be the set of all distinct $d$-tuples $(\delta(q_0,w),\ldots,\delta(q_{d-1},w))$ with $w \in \Sigma_k^{*}$. In other words, this is the set of states in the $d$-fold product of $\mathcal{A}$ with itself (in the sense of \cite[p. 22]{Ko97}) that are accessible from $(q_0,\ldots,q_{d-1})$. We can find them using the following simple algorithm. Define the function $\delta^d \colon Q^d \times \Sigma_k \rightarrow Q^d$ by
 \[ \delta^d((q_{j_0}, \ldots, q_{j_{d-1}}), a) = (\delta(q_{j_0},a) \ldots, \delta(q_{j_{d-1}},a)). \]
 
Put  $S_0 = \{(q_0,\ldots,q_d)\}$ and $S_{i+1} = \delta^d(S_i \times \Sigma_k) \setminus \bigcup_{j=0}^{i} S_j$ for $i \geq 0$.   The set $S_i$ contains precisely the $d$-tuples of the form $(\delta(q_0,w),\ldots,\delta(q_{d-1},w))$ with $|w|=i$, that do not belong in any $S_j$ with $j<i$. It is clear that the $S_i$ are pairwise disjoint and that there exists $i_0 \geq 0$ such that $S_i$ is empty for $i > i_0$, and otherwise nonempty. We have
\[ S =  \bigcup_{i=0}^{i_0} S_i.\]
The number $i_0$ can be roughly estimated from above by $d^d$.
 
Choose $w_1, \ldots, w_e \in \Sigma_k^{*}$ such that the $d$-tuples $(\delta(q_0,w_j),\ldots,\delta(q_{d-1},w_j))$ are all distinct and form the whole set $S$.  The following proposition asserts that it is necessary and sufficient for our purpose to consider linear dependence relations among the functions $f_i$ restricted to the set $\{w_1,\ldots,w_e\}$ (each such restriction can be considered a vector in $\mathbb{C}^{e}$).
 
\begin{prop} \label{prop:lin_dep}
We have 
\[\sum_{i=0}^{d-1} \beta_i f_i = 0\]
for some $\beta_0, \ldots, \beta_{d-1} \in \mathbb{C}$ if and only if for all $j=1,\ldots,e$
\[ \sum_{i=0}^{d-1} \beta_i f_i(w_j) = 0.\]
\end{prop}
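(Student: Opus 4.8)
The plan is to observe that both sides of the claimed equivalence depend only on the values $f_i(w)$ as $w$ ranges over $\Sigma_k^*$, and that these values are entirely governed by the $d$-tuple $(\delta(q_0,w),\ldots,\delta(q_{d-1},w))\in S$. First I would prove the forward implication, which is trivial: if $\sum_{i=0}^{d-1}\beta_i f_i=0$ as a function on $\Sigma_k^*$, then in particular it vanishes at each $w_j$, giving $\sum_{i=0}^{d-1}\beta_i f_i(w_j)=0$ for every $j=1,\ldots,e$. The substance is in the converse.

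For the converse, suppose $\sum_{i=0}^{d-1}\beta_i f_i(w_j)=0$ for all $j=1,\ldots,e$; I want to show $\sum_{i=0}^{d-1}\beta_i f_i(w)=0$ for an arbitrary $w\in\Sigma_k^*$. The key point is the ``prefix-independence'' of the tuple of finite-state functions: for any $u,v\in\Sigma_k^*$ one has $f_i(uv)=\uptau(\delta(q_i,uv))=\uptau(\delta(\delta(q_i,u),v))$, so if $(\delta(q_0,u),\ldots,\delta(q_{d-1},u))=(\delta(q_0,u'),\ldots,\delta(q_{d-1},u'))$ then $f_i(uv)=f_i(u'v)$ for every $i$ and every suffix $v$. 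More importantly, the value $f_i(w)$ depends on $w$ only through the single state $\delta(q_i,w)$, hence through the tuple $(\delta(q_0,w),\ldots,\delta(q_{d-1},w))$. By the definition of $S$ and the choice of the $w_j$, for the given $w$ there is an index $j$ with $(\delta(q_0,w),\ldots,\delta(q_{d-1},w))=(\delta(q_0,w_j),\ldots,\delta(q_{d-1},w_j))$. Applying $\uptau$ coordinatewise yields $f_i(w)=f_i(w_j)$ for all $i=0,\ldots,d-1$, and therefore
\[
\sum_{i=0}^{d-1}\beta_i f_i(w)=\sum_{i=0}^{d-1}\beta_i f_i(w_j)=0.
\]
Since $w$ was arbitrary, $\sum_{i=0}^{d-1}\beta_i f_i=0$ as a function, completing the proof.

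The only step requiring any care is the claim that every $d$-tuple $(\delta(q_0,w),\ldots,\delta(q_{d-1},w))$ occurring for $w\in\Sigma_k^*$ equals $(\delta(q_0,w_j),\ldots,\delta(q_{d-1},w_j))$ for some $j$; but this is immediate from the construction preceding the proposition, where $S$ was defined precisely as the set of all such tuples (equivalently, the accessible states of the $d$-fold product automaton reached from $(q_0,\ldots,q_{d-1})$), and $w_1,\ldots,w_e$ were chosen so that their associated tuples exhaust $S$. I do not anticipate a genuine obstacle here; the argument is essentially a pigeonhole-type observation that finitely many reachable product-states determine all values of the finite-state functions. The main thing to get right is simply to state cleanly that $f_i(w)$ factors through $\delta(q_i,w)$ and hence through the product-state, so that equality of product-states forces equality of all the $f_i$-values simultaneously.
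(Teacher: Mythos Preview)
Your proof is correct and follows essentially the same approach as the paper: for an arbitrary $w\in\Sigma_k^*$ one finds some $j$ with $\delta(q_i,w)=\delta(q_i,w_j)$ for all $i$, whence $f_i(w)=f_i(w_j)$ and the linear relation transfers. The paper's version is simply a one-line statement of this observation.
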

 
Although Proposition \ref{prop:lin_dep} allows us to determine all the linear dependence relations among the $f_i$, in some cases it may be easier to use the following condition.
Roughly speaking, it says that if $\mathcal{A}$ contains subautomata having the same structure and their output vectors are linearly dependent, then the same linear dependence carries over to the finite state functions corresponding to the states of these subautomata. To simplify the notation write $f_q = f_i$, when $q=q_i$.

\begin{prop} \label{prop:isomorphism}
Let $Q' \subset Q$ be nonempty and such that $\delta(Q' \times \Sigma_k^*) \subset Q'$. Assume that a function $\rho\colon Q' \rightarrow Q'$  satisfies $\delta(\rho(q),j)= \rho(\delta(q,j))$ for each $q \in Q'$ and $j \in \Sigma_k$.
 Let $m \geq 1$ be minimal such that $\rho^m = \rho^j$ for some $j<m$. Choose a state $q' \in Q'$ and let $\beta_0, \ldots, \beta_{m-1} \in \mathbb{C}$. Then
 $$ \sum_{i=0}^{m-1} \beta_i f_{\rho^i(q)} = 0$$
for all $q \in \delta(\{q'\} \times \Sigma_k^*)$ if and only if
$$\sum_{i=0}^{m-1} \beta_i \uptau( \rho^i(q)) = 0$$
for all $q \in \delta(\{q'\} \times \Sigma_k^*)$.
\end{prop}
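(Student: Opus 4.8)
The plan is to prove both directions by relating the finite-state functions $f_{\rho^i(q)}$ to the outputs $\uptau(\rho^i(q))$ through the defining identity $f_p(w) = \uptau(\delta(q_p,w))$, using the commutation hypothesis on $\rho$ to move $\rho$ past $\delta$. The forward implication is immediate: if $\sum_{i=0}^{m-1}\beta_i f_{\rho^i(q)} = 0$ as a function on $\Sigma_k^*$ for all $q \in \delta(\{q'\}\times\Sigma_k^*)$, then evaluating at the empty word $\epsilon$ gives $\sum_{i=0}^{m-1}\beta_i f_{\rho^i(q)}(\epsilon) = \sum_{i=0}^{m-1}\beta_i \uptau(\rho^i(q)) = 0$, since $f_p(\epsilon) = \uptau(\delta(q_p,\epsilon)) = \uptau(q_p)$.

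For the converse, assume $\sum_{i=0}^{m-1}\beta_i\uptau(\rho^i(q)) = 0$ for every $q \in \delta(\{q'\}\times\Sigma_k^*)$. Fix such a $q$ and an arbitrary word $w \in \Sigma_k^*$; I want to show $\sum_{i=0}^{m-1}\beta_i f_{\rho^i(q)}(w) = 0$. Writing $q = q_p$ and $\rho^i(q) = q_{p_i}$, we have $f_{\rho^i(q)}(w) = \uptau(\delta(\rho^i(q), w))$. The key step is the claim that $\delta(\rho^i(q), w) = \rho^i(\delta(q,w))$ for all $i \geq 0$ and all $w \in \Sigma_k^*$. This follows by induction on $|w|$ from the hypothesis $\delta(\rho(q),j) = \rho(\delta(q,j))$: iterating gives $\delta(\rho^i(q), j) = \rho^i(\delta(q,j))$ for a single letter, and then an induction on word length using the extension $\delta(q,wa) = \delta(\delta(q,w),a)$ together with the fact that $Q'$ is closed under $\delta$ (so $\delta(q,w) \in Q'$ and $\rho$ applies) propagates the identity. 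Granting the claim, $\sum_{i=0}^{m-1}\beta_i f_{\rho^i(q)}(w) = \sum_{i=0}^{m-1}\beta_i\uptau(\rho^i(\delta(q,w)))$. Now $\delta(q,w) = \delta(\delta(q',u)\cdots) $ lies in $\delta(\{q'\}\times\Sigma_k^*)$ as well (composing the word that reaches $q$ from $q'$ with $w$), so the hypothesis applies with $q$ replaced by $\delta(q,w)$, and the sum vanishes. Since $w$ was arbitrary, $\sum_{i=0}^{m-1}\beta_i f_{\rho^i(q)} = 0$ as a function.

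I would also remark on the role of the minimality of $m$ with $\rho^m = \rho^j$ for some $j < m$: this guarantees that $f_{\rho^0(q)}, \ldots, f_{\rho^{m-1}(q)}$ exhausts (up to the eventual cycle) the distinct functions arising from iterating $\rho$, so that the linear relation stated is the most general one worth considering; it does not actually enter the equivalence proof itself, which works for any fixed $m$. The main obstacle, such as it is, is purely bookkeeping: carefully tracking that every state to which we apply $\rho$ genuinely lies in $Q'$ (hence in the domain of $\rho$) and that every $q$ to which we apply the output hypothesis genuinely lies in $\delta(\{q'\}\times\Sigma_k^*)$ — both of which are ensured by the closure assumption $\delta(Q'\times\Sigma_k^*)\subset Q'$ and by the semigroup property of $\delta$ on words. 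No genuinely hard estimate or construction is needed.
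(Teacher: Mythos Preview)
Your proof is correct and follows essentially the same approach as the paper's own argument: both establish the identity $\sum_i \beta_i f_{\rho^i(q)}(v) = \sum_i \beta_i \uptau(\rho^i(\delta(q,v)))$ via the commutation $\delta(\rho^i(q),v) = \rho^i(\delta(q,v))$, and then observe that $\delta(q,v) \in \delta(\{q'\}\times\Sigma_k^*)$ to close the equivalence. The paper compresses all of this into a single displayed chain of equalities, while you spell out both directions separately and the induction on $|w|$ behind the commutation; your remark that the minimality of $m$ plays no role in the equivalence itself is also accurate and consistent with the paper's proof, which does not invoke it.
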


We illustrate the use of this criterion in the  example below.
\begin{ex}
Consider the $2$-DFAO in Figure \ref{fig:isomorphism}.
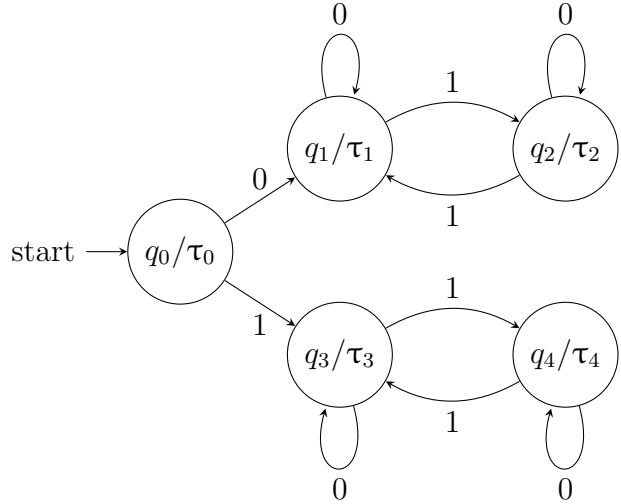
\begin{figure}[ht]
\centering
\begin{tikzpicture}
\node[state, initial](q0){$q_0/\uptau_0$};
\node[state, above right of=q0,yshift=-0.75cm](q1){$q_1/\uptau_1$};
\node[state, right of=q1](q2){$q_2/\uptau_2$};
\node[state, below right of=q0,yshift=0.75cm](q3){$q_3/\uptau_3$};
\node[state,right of=q3](q4){$q_4/\uptau_4$};

\draw
 (q0) edge[above] node{0} (q1)
 (q0) edge[below] node{1} (q3)
 (q1) edge[loop above] node{0} (q1)
 (q1) edge[bend left, above] node{1} (q2)
  (q2) edge[loop above] node{0} (q2)
 (q2) edge[bend left, below] node{1} (q1)
 (q3) edge[loop below] node{0} (q3)
 (q3) edge[bend left, above] node{1} (q4)
  (q4) edge[loop below] node{0} (q4)
 (q4) edge[bend left, below] node{1} (q3);
\end{tikzpicture}
\caption{An example $2$-DFAO}
\label{fig:isomorphism}
\end{figure}
We will show that regardless of the output,  $\dim(\operatorname{span}_{\mathbb{C}}\{f_0,f_1,f_2,f_3,f_4\}) \leq 3$. 
Let $Q' = \{q_1,q_2,q_3,q_4\}$ and define
\[\rho(q_1) = q_3, \qquad \rho(q_3)= q_2, \qquad \rho(q_2)= q_4, \qquad \rho(q_4)=q_1.\]
Then $\rho$ satisfies the assumption in Corollary \ref{prop:isomorphism}.

Choose $q'=q_1$, so that $\delta(\{q'\}\times \Sigma_2^*) = \{q_1,q_2\}$. Proposition \ref{prop:isomorphism} with $m=4$ says that an equality of the form
\[ \beta_0 (\uptau_1, \uptau_2) + \beta_1 (\uptau_3,\uptau_4)+\beta_2(\uptau_2,\uptau_1)+\beta_3(\uptau_4,\uptau_3)=0,\]
where $\beta_0, \beta_1, \beta_2, \beta_3 \in \mathbb{C}$, is equivalent to
\[ \beta_0 (f_1, f_2) + \beta_1 (f_3,f_4)+\beta_2(f_2,f_1)+\beta_3(f_4,f_3)=0.\]
\end{ex}

Proposition \ref{prop:isomorphism} turns out to be useful when studying certain pattern counting sequences.
Let $e_{k:v}(n)$ denote the number of occurrences of a pattern $v \neq \epsilon$ in the base-$k$ expansion of $n$ without leading zeros.

Fix  an integer  $m \geq 2$ and let $\xi_m$ be a primitive $m$th root of unity (we do not assume any relation between $\xi_m$ and $\omega$).
Consider the sequence
\begin{equation} \label{eq:pattern}
a(n) = \xi_m^{e_{k:v}(n)},
\end{equation}
which counts the number of (possibly overlapping) occurrences of the pattern $v$ modulo $m$. Such a sequence $\{a(n)\}_{n \geq 0}$ is $k$-automatic.
In particular, for $k=2, v =\texttt{1},$ and $m=2$ we get the Thue--Morse sequence, whereas $k=2, v =\texttt{11}$ and $m=2$ yields the Rudin--Shapiro sequence. The following result, whose proof utilizes Proposition \ref{prop:isomorphism}, gives a sharper bound on $l_{\min}$ for this class of automatic sequences.

\begin{prop} \label{prop:pattern}
Let $\{a(n)\}_{n \geq 0}$ be defined as in \eqref{eq:pattern}.
If $v$ has no leading zeros, then $l_{\min} \leq |v|$; otherwise,  $l_{\min} \leq |v|+1$. 
\end{prop}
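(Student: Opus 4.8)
The plan is to build an explicit $k$-DFAO inducing $\{a(n)\}_{n\geq 0}$ whose state set has a natural ``rotational'' structure that Proposition~\ref{prop:isomorphism} can exploit, and then read off the bound on $\dim(\operatorname{span}_{\mathbb{C}}\{f_0,\ldots,f_{d-1}\})$, after which Proposition~\ref{prop:simple_bound} gives the conclusion. First I would describe a standard automaton for counting occurrences of $v$ modulo $m$: assume $v$ has no leading zero; take as states the pairs $(\pi, c)$, where $\pi$ is the longest suffix of the currently read prefix that is a proper prefix of $v$ (the Knuth--Morris--Pratt failure-function state, so there are $|v|$ choices $\pi = v_1\cdots v_j$ for $0 \le j \le |v|-1$), and $c \in \mathbb{Z}/m\mathbb{Z}$ records the running count modulo $m$; the transition on reading a digit updates $\pi$ by the usual KMP rule and increments $c$ by $1$ whenever a full occurrence of $v$ is completed. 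The output function is $\uptau(\pi,c) = \xi_m^c$. Forward-induction by this automaton is exactly the statement that $a(n)$ counts occurrences of $v$ mod $m$; a mild adjustment handles the initial state, as in Section~\ref{sec:Preliminaries}. The total number of states is $|v|\cdot m$, which alone is far too weak, so the point is the extra structure.

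Next I would apply Proposition~\ref{prop:isomorphism} with $Q'$ the whole (accessible) state set and $\rho(\pi,c) = (\pi, c+1)$. One checks $\rho$ commutes with every transition: the KMP component of a transition does not depend on $c$, and the increment-on-completion is the same whether or not we have first shifted $c$ by $1$; hence $\delta(\rho(q),j) = \rho(\delta(q,j))$. Clearly $\rho^m = \rho^0 = \mathrm{id}$, so the minimal $m$ in the proposition is (a divisor of) our $m$, and on the output side $\uptau(\rho^i(\pi,c)) = \xi_m^{c+i}$. The linear relation $\sum_{i=0}^{m-1}\beta_i\,\uptau(\rho^i(q)) = 0$ for all relevant $q$ becomes $\xi_m^c\sum_{i=0}^{m-1}\beta_i\xi_m^i = 0$, i.e. $\sum_i \beta_i \xi_m^i = 0$; choosing $\beta = (\xi_m^{-1}, -1, 0,\ldots,0)$ works, so Proposition~\ref{prop:isomorphism} yields $f_{\rho(q)} = \xi_m\, f_q$ for every accessible $q$. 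In other words, the $c$-coordinate is irrelevant up to a scalar: all $m$ finite-state functions $f_{(\pi,0)}, f_{(\pi,1)}, \ldots, f_{(\pi,m-1)}$ sharing the same $\pi$ span a one-dimensional subspace. Therefore $\dim(\operatorname{span}_{\mathbb{C}}\{f_q\}) \le |v|$, the number of distinct KMP states, and Proposition~\ref{prop:simple_bound} gives $l_{\min} \le |v|$.

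For the case where $v$ does have leading zeros, the subtlety is that $e_{k:v}(n)$ refers to the expansion \emph{without} leading zeros, so a fixed-length reading automaton must not count spurious occurrences of $v$ that straddle the (absent) leading zeros; equivalently the automaton has to know whether it has yet read a nonzero digit. I would handle this by adding one extra ``not-yet-started'' state $q_{\mathrm{start}}$ (with $\uptau(q_{\mathrm{start}}) = \xi_m^0 = 1$, since an all-zero prefix has zero occurrences): from $q_{\mathrm{start}}$, reading $0$ loops, and reading a nonzero digit $a$ enters the appropriate KMP/count state of the ``real'' automaton initialized as if the prefix so far were just $a$. The map $\rho$ is extended by fixing $q_{\mathrm{start}}$; it still commutes with all transitions (the nonzero-digit transition out of $q_{\mathrm{start}}$ lands in $c=0$ or $c=$ the count of $v$ in the single digit $a$, which is $0$ unless $|v|=1$, and in any case this is consistent with $\rho$ fixing $q_{\mathrm{start}}$), so the same argument collapses each $m$-orbit to dimension $1$, leaving $|v|$ KMP classes plus the single class $\{q_{\mathrm{start}}\}$, hence $\dim \le |v|+1$ and $l_{\min}\le|v|+1$.

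The main obstacle I anticipate is purely bookkeeping around the leading-zero case: making precise that the KMP-style automaton, run on fixed-length inputs $w$ with $[w]_k = n$ (possibly with leading zeros), produces the value $\xi_m^{e_{k:v}(n)}$ with $e_{k:v}$ computed from $(n)_k$ and \emph{not} from the padded word $w$, and verifying that $\rho$ genuinely commutes with the transition out of $q_{\mathrm{start}}$ on each nonzero digit. Everything else — the KMP correctness, the commutation of $\rho$ with the generic transitions, and the linear-algebra step extracting dimension $\le|v|$ (resp. $\le|v|+1$) — is routine once the automaton is written down carefully.
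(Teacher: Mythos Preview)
Your approach is essentially identical to the paper's: both build the KMP-style automaton on the state set $\{0,\ldots,m-1\}\times\{0,\ldots,|v|-1\}$, define the shift $\rho$ that increments the count coordinate, invoke Proposition~\ref{prop:isomorphism} to get $f_{\rho(q)}=\xi_m f_q$, and conclude via Proposition~\ref{prop:simple_bound}. The no-leading-zero case is fine as written.

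There is, however, a genuine slip in your leading-zero case. You extend $\rho$ by fixing $q_{\mathrm{start}}$ and assert that this extension still commutes with all transitions. It does not: for a nonzero digit $a$ one has
\[
\delta(\rho(q_{\mathrm{start}}),a)=\delta(q_{\mathrm{start}},a)=(\pi_a,0),
\qquad
\rho(\delta(q_{\mathrm{start}},a))=\rho(\pi_a,0)=(\pi_a,1),
\]
which disagree. (You even flagged this verification as the main obstacle.) The fix is not to extend $\rho$ at all: the original state set $Q$ (without $q_{\mathrm{start}}$) is already $\delta$-closed inside the enlarged automaton, so Proposition~\ref{prop:isomorphism} applies with $Q'=Q$ and yields $f_{\rho(q)}=\xi_m f_q$ for all $q\in Q$. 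This collapses the span of the $f_q$, $q\in Q$, to dimension at most $|v|$, and the single extra function $f_{q_{\mathrm{start}}}$ contributes at most one more, giving $l_{\min}\le |v|+1$. This is exactly what the paper does.
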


\section{The integrality of coefficients} \label{sec:Integrality}
In this section we investigate the integrality of coefficients of the recurrence relation (\ref{eq:recurrence_form}) for a general $k$-automatic sequence $\{a(n)\}_{n \geq 0}$ under some mild assumptions. 
Let $C(x,y) \in \mathbb{C}[x,y]$  be the characteristic polynomial (in $y$) either of  $\widehat{M}(k^s;x)$, in the case when $\mathcal{A}$ forward-induces  $\{a(n)\}_{n \geq 0}$, or of $\widehat{M}^R(k^s;x)$, when $\mathcal{A}$ backward-induces  $\{a(n)\}_{n \geq 0}$. Write 
$$C(x,y) = \sum_{m=0}^{c} C_m(x) y^m,$$
where $C_m(x) \in \mathbb{C}[x]$ (recall that $c$ denotes the dimension of $\widehat{M}(x)$).  As we have mentioned earlier, the recurrence relation \eqref{eq:recurrence_form} holds with the numbers $C_m(\omega)$ playing the role of the coefficients. 
Let $r_0 \geq 1$ be minimal such that $\omega^{r_0}=1$ and let $s_0$ denote the multiplicative order of $k$ modulo $r_0$. Let $\psi_k$ be the automorphsim of $\mathbb{Q}(\omega)$ mapping $\omega$ to $\omega^k$.
It turns out that if the entries of $\widehat{M}(x)$ are polynomials with rational coefficients, then we can explicitly indicate a subfield of $\mathbb{Q}(\omega)$ of dimension $\varphi(r_0)/s_0$ over $\mathbb{Q}$, containing all $C_m(\omega)$. The second part of Proposition \ref{prop:real} is a special case of this result.
\begin{thm} \label{thm:extension}
If $\widehat{m}_{ij} \in \mathbb{Q}[x]$ for all $i,j \in \{0,1,\ldots,c\}$, then the elements $C_m(\omega)$ lie in the subfield of $L \subset \mathbb{Q}(\omega)$ fixed by the subgroup $\langle \psi_k \rangle$ generated by $\psi_k$.
\end{thm}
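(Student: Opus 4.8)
The plan is to exploit the Galois-equivariance of the matrices $\widehat{M}(k^s;x)$ and $\widehat{M}^R(k^s;x)$ under substituting $x \mapsto x^k$, together with the fact that $\psi_k$ fixes these matrices when evaluated at $\omega$. First I would reduce to the case $r = r_0$ and $s = s_0$: since $\psi_k$ has order $s_0$ in $\operatorname{Gal}(\mathbb{Q}(\omega)/\mathbb{Q})$ (as $s_0$ is the multiplicative order of $k$ modulo $r_0$), the subfield $L$ fixed by $\langle\psi_k\rangle$ has degree $\varphi(r_0)/s_0$ over $\mathbb{Q}$, and it suffices to show each $C_m(\omega)$ is fixed by $\psi_k$. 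The coefficients $C_m(\omega)$ are, up to sign, the elementary symmetric functions of the eigenvalues of $\widehat{M}(k^s;\omega)$ (respectively $\widehat{M}^R(k^s;\omega)$), and these are algebraic numbers lying in $\mathbb{Q}(\omega)$ because the entries of $\widehat{M}(x)$ lie in $\mathbb{Q}[x]$ and $\omega \in \mathbb{Q}(\omega)$; thus it makes sense to ask whether $\psi_k$ fixes them.

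The key computation is the following. Applying the automorphism $\psi_k$ to the matrix $\widehat{M}(k^s;\omega) = \widehat{M}(\omega^{k^{s-1}})\widehat{M}(\omega^{k^{s-2}})\cdots\widehat{M}(\omega^k)\widehat{M}(\omega)$ entrywise — which is legitimate since the entries are polynomials in $\omega$ with rational coefficients — one gets $\psi_k(\widehat{M}(k^s;\omega)) = \widehat{M}(\omega^{k^s})\widehat{M}(\omega^{k^{s-1}})\cdots\widehat{M}(\omega^{k^2})\widehat{M}(\omega^k)$, because $\psi_k$ sends $\omega^{k^j}$ to $\omega^{k^{j+1}}$. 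Now using $k^s \equiv 1 \pmod{r_0}$ we have $\omega^{k^s} = \omega$, so
$$\psi_k(\widehat{M}(k^s;\omega)) = \widehat{M}(\omega)\,\widehat{M}(\omega^{k^{s-1}})\cdots\widehat{M}(\omega^{k^2})\,\widehat{M}(\omega^k).$$
Comparing with the original product, we see that $\psi_k(\widehat{M}(k^s;\omega))$ is a cyclic permutation of the factors; writing $P = \widehat{M}(\omega^{k^{s-1}})\cdots\widehat{M}(\omega^k)$ and $N = \widehat{M}(\omega)$, the original matrix is $PN$ and the new one is $NP$. Since $PN$ and $NP$ are conjugate whenever either factor is invertible — and in any case always have the same characteristic polynomial — we conclude $\psi_k$ fixes the characteristic polynomial of $\widehat{M}(k^s;\omega)$, hence fixes each coefficient $C_m(\omega)$. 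The backward case is entirely symmetric: for $\widehat{M}^R(k^s;\omega) = \widehat{M}(\omega)\widehat{M}(\omega^k)\cdots\widehat{M}(\omega^{k^{s-1}})$ the automorphism $\psi_k$ again cyclically permutes the factors, using $\omega^{k^s} = \omega$, and the same conjugacy argument applies.

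To finish, I would note that $C_m(\omega)$ being fixed by every element of $\langle\psi_k\rangle$ — which follows from being fixed by the generator $\psi_k$ — is precisely the statement that $C_m(\omega) \in L$, by the Galois correspondence. One should also dispose of the degenerate case $r_0 = 1$ (where $\omega = 1$, $L = \mathbb{Q}$, and the $C_m(\omega)$ are rational automatically) and remark that for general $s$ a multiple of $s_0$, the matrix $\widehat{M}(k^s;\omega)$ is a power-type rearrangement of the $s_0$-fold product, so its characteristic polynomial coefficients still lie in $L$. The main obstacle I anticipate is the bookkeeping around the cyclic-permutation identity when the factors need not be invertible: rather than invoking conjugacy one should cite the general fact that $AB$ and $BA$ have equal characteristic polynomials for square matrices $A,B$ of the same size, which handles this cleanly without any invertibility hypothesis. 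A secondary subtlety is making sure that applying $\psi_k$ entrywise to a product of matrices equals the product of the entrywise images — this is immediate since $\psi_k$ is a ring homomorphism, but it is the one place where the hypothesis $\widehat{m}_{ij} \in \mathbb{Q}[x]$ is genuinely used.
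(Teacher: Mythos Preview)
Your proposal is correct and follows essentially the same route as the paper: both arguments observe that applying $\psi_k$ (equivalently, substituting $\omega \mapsto \omega^k$) cyclically permutes the factors in the product defining $\widehat{M}(k^s;\omega)$, and then invoke the fact that $PN$ and $NP$ have the same characteristic polynomial. The paper's version is slightly leaner in that it works directly with arbitrary $s$ satisfying $k^s \equiv 1 \pmod{r}$, so your preliminary reduction to $s = s_0$ and the subsequent remark about general $s$ being a ``power-type rearrangement'' are unnecessary detours---the cyclic-shift computation already goes through verbatim for any such $s$.
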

The assumption $\widehat{m}_{ij} \in \mathbb{Q}[x]$ is satisfied whenever $\widehat{M}(x)$ corresponds to a set of generators of $\operatorname{span}_{\mathbb{Q}}\{f_0,\ldots,f_{d-1}\}$, in particular if we consider$\widehat{M}(x) = M(x)$. 
In the case of $r_0$ is squarefree  we can give a more explicit description of $L$. Let $f = \varphi(r_0)/s_0$ and consider the \emph{Gaussian periods}  $\eta_0, \ldots, \eta_{f-1}$, defined
\[\eta_j = \sum_{i=0}^{s_0-1}\omega^{jk^i}.\]
Then $\eta_0, \ldots, \eta_{f-1}$ form an integral basis of $L$ over $\mathbb{Q}$. It is also easy to show that in this case $L = \mathbb{Q}(\eta_0)$. Indeed, we have $ \mathbb{Q}(\eta_0) \subset L$. To prove the reverse inclusion, observe that $\eta_0, \ldots, \eta_{f-1}$ are all the distinct Galois conjugates of $\eta_0$. Hence, $[\mathbb{Q}(\eta_0)\colon\mathbb{Q}] = f$, which proves our claim.
The following immediate corollary of Theorem \ref{thm:extension}  is a partial generalization of Proposition \ref{prop:prime_power}(i) for an arbitrary $k$-automatic sequence.
\begin{cor} \label{cor:primitive}
Assume that $r_0$ is a prime power and $k$ is a primitive root modulo $r_0$. If $\widehat{m}_{ij} \in \mathbb{Q}[x]$
for all $i,j \in \{0,1,\ldots,c\}$, then $C_m(\omega) \in \mathbb{Q}$ 
 for $m = 0,1,\ldots,c$.
\end{cor}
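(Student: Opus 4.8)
The plan is to deduce this directly from Theorem \ref{thm:extension} together with the basic Galois theory of cyclotomic fields. First I would invoke Theorem \ref{thm:extension}: since $\widehat{m}_{ij} \in \mathbb{Q}[x]$ by hypothesis, each $C_m(\omega)$ lies in the subfield $L \subset \mathbb{Q}(\omega)$ fixed by the cyclic subgroup $\langle \psi_k \rangle \leq \operatorname{Gal}(\mathbb{Q}(\omega)/\mathbb{Q})$. So it suffices to show that under the stated hypotheses on $r_0$ and $k$ we have $L = \mathbb{Q}$, equivalently $\langle \psi_k \rangle = \operatorname{Gal}(\mathbb{Q}(\omega)/\mathbb{Q})$.

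The second step is the group-theoretic heart of the argument, which is short. Let $r_0 \geq 1$ be minimal with $\omega^{r_0} = 1$, so that $\omega$ is a primitive $r_0$th root of unity and $\mathbb{Q}(\omega)$ is the $r_0$th cyclotomic field, with $\operatorname{Gal}(\mathbb{Q}(\omega)/\mathbb{Q}) \cong (\mathbb{Z}/r_0\mathbb{Z})^{\times}$ of order $\varphi(r_0)$; under this isomorphism $\psi_k$ corresponds to the residue class of $k$ (which is a unit mod $r_0$ because $r$, hence $r_0$, is coprime to $k$). The order of $\psi_k$ in the Galois group is exactly the multiplicative order $s_0$ of $k$ modulo $r_0$. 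When $r_0$ is a prime power and $k$ is a primitive root modulo $r_0$, the group $(\mathbb{Z}/r_0\mathbb{Z})^{\times}$ is cyclic and $k$ generates it, so $s_0 = \varphi(r_0)$ and $\langle \psi_k \rangle$ is the whole Galois group. (Equivalently, in the notation preceding the statement, $f = \varphi(r_0)/s_0 = 1$.) Hence its fixed field is $L = \mathbb{Q}$, and therefore $C_m(\omega) \in \mathbb{Q}$ for every $m = 0, 1, \ldots, c$.

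I should also dispose of the degenerate case $r_0 = 1$: then $\omega = 1$, $\mathbb{Q}(\omega) = \mathbb{Q}$, and there is nothing to prove. In all other cases the argument above applies verbatim. There is no real obstacle here — the only point requiring a moment's care is the identification of $\psi_k$ with the class of $k$ in $(\mathbb{Z}/r_0\mathbb{Z})^{\times}$ and the observation that "$k$ primitive root mod $r_0$" is precisely the statement $s_0 = \varphi(r_0)$, which makes $\langle\psi_k\rangle$ exhaust the Galois group; everything else is a direct citation of Theorem \ref{thm:extension} and the standard structure of $\operatorname{Gal}(\mathbb{Q}(\zeta_{r_0})/\mathbb{Q})$.
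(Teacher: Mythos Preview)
Your proof is correct and is exactly the argument the paper has in mind: the corollary is stated as an ``immediate'' consequence of Theorem~\ref{thm:extension}, and your derivation---identifying $\langle\psi_k\rangle$ with all of $\operatorname{Gal}(\mathbb{Q}(\omega)/\mathbb{Q})$ when $k$ is a primitive root modulo $r_0$, so that the fixed field $L$ collapses to $\mathbb{Q}$---is precisely what makes it immediate.
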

In general, we cannot expect $C_m(\omega)$ to be integers unless $\widehat{m}_{ij} \in \mathbb{Z}[x]$ (in such case $C_m(\omega)$ is both rational and an algebraic integer, hence an integer). This is not a concern if  all $C_m(\omega)$ are rational , since we can multiply the recurrence by an appropriate integer to clear their denominators.
Even if the numbers $C_m(\omega)$ are not rational, it is possible to obtain a recurrence relation of desired form with integer coefficients. 

\begin{thm} \label{thm:int_coef}
Let $q = |(\mathbb{Z}/r_0\mathbb{Z})^{\times}/\langle k \rangle|$ and choose $u_0 = 1, u_1, \ldots, u_{q-1}$ representatives of distinct cosets in $(\mathbb{Z}/r_0\mathbb{Z})^{\times}/\langle k \rangle$. Write
\[\prod_{j=0}^{q-1} C(x^{u_j},y) = \sum_{m=0}^{qc} D_m(x) y^m.\]
If $\widehat{m}_{ij} \in \mathbb{Q}[x]$ for all $i,j \in \{0,1,\ldots,c\}$, then for all $n \geq 1$ we have
\[\sum_{m=0}^{qc} D_m(\omega) A(k^{ms}n;\omega) = 0\]
and $D_m(\omega) \in \mathbb{Q}$ for $m=0,\ldots,ql$.
\end{thm}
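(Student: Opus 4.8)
The plan is to combine Theorem \ref{thm:recur} (applied to a suitable product polynomial) with a Galois-theoretic argument showing that the coefficients of that product are fixed by the whole group $\operatorname{Gal}(\mathbb{Q}(\omega)/\mathbb{Q})$, hence rational. First I would recall that, by the remark following Theorem \ref{thm:recur}, the polynomial $C(x,y)$ — the characteristic polynomial of $\widehat{M}(k^s;x)$ or $\widehat{M}^R(k^s;x)$, depending on whether $\mathcal{A}$ forward- or backward-induces $\{a(n)\}_{n\geq 0}$ — satisfies the relevant hypothesis of Theorem \ref{thm:recur} for every $r_0$th root of unity. The key observation is that if $C(\omega,y)$ annihilates the matrix (in the appropriate sense), then so does $C(\omega^{u},y)$ for any $u$, because $\omega^{u}$ is again an $r_0$th root of unity; consequently the product $\prod_{j=0}^{q-1} C(\omega^{u_j},y)$ still satisfies the hypothesis of Theorem \ref{thm:recur} (its value at $y=\widehat{M}(k^s;\omega)$ is a product one of whose factors vanishes, and the factors commute since they are all polynomials in the same matrix). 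This immediately yields the recurrence relation $\sum_{m=0}^{qc} D_m(\omega) A(k^{ms}n;\omega)=0$ for all $n\geq 1$.

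It remains to prove that $D_m(\omega)\in\mathbb{Q}$. The strategy is to show that each $D_m(\omega)$ is fixed by every automorphism of $\operatorname{Gal}(\mathbb{Q}(\omega)/\mathbb{Q})$. By Theorem \ref{thm:extension} (whose hypothesis $\widehat{m}_{ij}\in\mathbb{Q}[x]$ we are assuming), the coefficients $C_m(\omega)$ are fixed by $\langle\psi_k\rangle$; more generally, for any $u$ coprime to $r_0$, applying the automorphism $\psi_u\colon\omega\mapsto\omega^u$ to the identity $C(\omega,y)=\sum_m C_m(\omega)y^m$ gives $C(\omega^u,y)=\sum_m \psi_u(C_m(\omega))\,y^m$, since $C$ has rational coefficients in $x$. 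Now take an arbitrary $\sigma\in\operatorname{Gal}(\mathbb{Q}(\omega)/\mathbb{Q})$, say $\sigma=\psi_v$ with $v\in(\mathbb{Z}/r_0\mathbb{Z})^\times$. Applying $\sigma$ to $\prod_{j=0}^{q-1}C(\omega^{u_j},y)=\sum_m D_m(\omega)y^m$ gives $\prod_{j=0}^{q-1}C(\omega^{vu_j},y)=\sum_m \sigma(D_m(\omega))y^m$. The point is that $\{vu_0,\ldots,vu_{q-1}\}$ is again a complete set of coset representatives of $(\mathbb{Z}/r_0\mathbb{Z})^\times/\langle k\rangle$, so each $vu_j$ is congruent modulo $r_0$ to $k^{e_j}u_{\pi(j)}$ for some permutation $\pi$ of $\{0,\ldots,q-1\}$ and some exponents $e_j$; since $\psi_k$ fixes each $C_m(\omega)$, we have $C(\omega^{vu_j},y)=C(\omega^{u_{\pi(j)}},y)$. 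Therefore the product is unchanged: $\prod_j C(\omega^{vu_j},y)=\prod_j C(\omega^{u_j},y)$, whence $\sigma(D_m(\omega))=D_m(\omega)$ for all $m$. Since $\sigma$ was arbitrary, $D_m(\omega)\in\mathbb{Q}$.

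The main obstacle — really the only nontrivial point — is making the coset-permutation argument precise: one must check that $C(\omega^{vu_j},y)$ depends on $vu_j$ only through its coset modulo $\langle k\rangle$, which is exactly the content of Theorem \ref{thm:extension} combined with the observation $\psi_k(C_m(\omega))=C_m(\omega)$, i.e.\ $C(\omega^{k},y)=C(\omega,y)$. A secondary technical point is justifying that $\prod_{j}C(\omega^{u_j},\widehat{M}(k^s;\omega))=0$: this holds because the $q$ matrices $C(\omega^{u_j},\widehat{M}(k^s;\omega))$ are all polynomials in the single matrix $\widehat{M}(k^s;\omega)$ and hence commute, and the factor with $u_0=1$ vanishes by the defining property of the characteristic polynomial (Cayley--Hamilton). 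The case of a backward-inducing automaton is handled identically with $\widehat{M}^R$ in place of $\widehat{M}$. I would close by noting that, as in the discussion after Corollary \ref{cor:primitive}, once the $D_m(\omega)$ are rational one may clear denominators to obtain an integer recurrence of the same length $qc$.
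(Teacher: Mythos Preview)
Your proof is correct and follows essentially the same route as the paper's: apply Theorem \ref{thm:recur} to the product polynomial (using that the $u_0=1$ factor annihilates $\widehat{M}(k^s;\omega)$ and that all factors, being polynomials in the same matrix, commute), and then establish $D_m(\omega)\in\mathbb{Q}$ by the Galois/coset-permutation argument resting on Theorem \ref{thm:extension}; the paper phrases the latter as ``the $D_m(\omega)$ are symmetric in the conjugates $C_j(\omega^{u_i})$'', which is exactly your argument in compressed form. One expository wrinkle: the sentence asserting that $C(\omega^u,y)$ ``also annihilates the matrix'' is false as written (it annihilates $\widehat{M}(k^s;\omega^u)$, not $\widehat{M}(k^s;\omega)$), but your parenthetical and your final paragraph supply the correct reasoning --- only the $u_0=1$ factor needs to vanish --- so this is a wording slip rather than a gap.
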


\section{Proofs} \label{sec:Proofs}
In this section we present the proofs of the results in this paper, along with some auxiliary lemmas. In each case we retain the notation from the corresponding section.
\subsection{Proofs of results in Section \ref{sec:TM}}
\begin{proof}[Proof of Proposition \ref{prop:TM_props}]
The identity (i) is an immediate consequence of \eqref{eq:TM_rec}, and further implies (ii). Part (iii) is \cite[Lemma 8.1]{DM00} (keep in mind the shift in indexing the polynomials), but can also be obtained straight from (i) and (ii). Equality (iv) is obviously true for $s=0$. By (ii) and induction on $s$  we get
\begin{align*}
x^{2^{s+1}-1} T\left(2^{s+1}; \frac{1}{x}\right) &=x^{2^{s+1}-1} (1-x^{-2^s})  T\left(2^{s}; \frac{1}{x}\right) =  (-1)^s(x^{2^s}-1) T(2^{s};x) \\
&=  (-1)^{s+1} T(2^{s+1}; x). \qedhere
\end{align*}
\end{proof}

Before proceeding to the further proofs we fix some additional notation. Let $\varphi$ denote the Euler totient function and let $\Phi_{n}$ be the $n$th cyclotomic polynomial. Let $ \psi_m \in \operatorname{Gal} (\mathbb{Q}(\omega)/\mathbb{Q})$ denote the automorphism taking $\omega$ to $\omega^m$, where $m \in \mathbb{Z}$ is coprime to $r_0$. In particular, we write $\overline{z} = \psi_{-1}(z)$ for  complex conjugation. The multiplicative group $(\mathbb{Z}/r_0\mathbb{Z})^{\times}$ and $\operatorname{Gal} (\mathbb{Q}(\omega)/\mathbb{Q})$ are isomorphic and of order $\varphi(r_0)$.
Let $\langle 2 \rangle $ denote the cyclic subgroup of $(\mathbb{Z}/r_0\mathbb{Z})^{\times}$ generated by $2$. 

\begin{proof}[Proof of Proposition \ref{prop:real}]
We have
$$\overline{T(2^{s_0};\omega)} = T(2^{s_0};\omega^{-1}) = (-1)^{s_0} T(2^{s_0}; \omega),$$
where we used Proposition \ref{prop:TM_props}(iv). The first part of the claim follows immediately.

To prove the second part we observe that $T(2^{s_0};\omega^2) = T(2^(s_0);\omega)$, hence this number is invariant under the action of the subgroup of $\operatorname{Gal} (\mathbb{Q}(\omega)/\mathbb{Q})$ generated by $\psi_2$. This subgroup is of order $s_0$ and the result follows from the fundamental theorem of Galois theory. 
\end{proof}

The following auxiliary lemma establishes a relation between $T(2^{s_0};\omega)$ and $\Phi_{r_0}(1)$.
\begin{lem} \label{lem:conj}
Let $q = |(\mathbb{Z}/r_0\mathbb{Z})^{\times}/\langle 2 \rangle|$ and assume that $1, m_1, \ldots, m_{q-1}$ are representatives of distinct cosets in $(\mathbb{Z}/r_0\mathbb{Z})^{\times}/\langle 2 \rangle$. Then
$$\Phi_{r_0}(1) = T(2^{s_0};\omega) \prod_{j=1}^{q-1} \psi_{m_j} (T(2^{s_0};\omega)).$$
\end{lem}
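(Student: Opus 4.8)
The plan is to compute the norm $N_{\mathbb{Q}(\omega)/\mathbb{Q}}$, or rather a partial norm down to the fixed field $L$ of $\langle\psi_2\rangle$, of the value $T(2^{s_0};\omega)$, and to recognize the result as $\Phi_{r_0}(1)$. First I would recall from Proposition \ref{prop:TM_props}(ii) that
\[
T(2^{s_0};\omega) = \prod_{i=0}^{s_0-1}\bigl(1-\omega^{2^i}\bigr),
\]
and from Proposition \ref{prop:real} that $T(2^{s_0};\omega)\in L$, where $L$ is the subfield of $\mathbb{Q}(\omega)$ fixed by $\langle\psi_2\rangle$; since $\langle 2\rangle$ has order $s_0$ in $(\mathbb{Z}/r_0\mathbb{Z})^\times$, we have $[L:\mathbb{Q}] = \varphi(r_0)/s_0 = q$, and $\operatorname{Gal}(L/\mathbb{Q})$ is identified with $(\mathbb{Z}/r_0\mathbb{Z})^\times/\langle 2\rangle$, with the cosets represented by $1, m_1,\ldots,m_{q-1}$.

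The key computation is then to expand the product
\[
T(2^{s_0};\omega)\prod_{j=1}^{q-1}\psi_{m_j}\bigl(T(2^{s_0};\omega)\bigr)
= \prod_{j=0}^{q-1}\prod_{i=0}^{s_0-1}\bigl(1-\omega^{m_j 2^i}\bigr),
\]
where $m_0 = 1$. The exponents $m_j 2^i$, as $j$ ranges over $0,\ldots,q-1$ and $i$ over $0,\ldots,s_0-1$, run exactly once through a full set of representatives of $(\mathbb{Z}/r_0\mathbb{Z})^\times$, because the cosets of $\langle 2\rangle$ are disjoint and each has size $s_0$. Hence the double product equals $\prod_{\gcd(a,r_0)=1}(1-\omega^a)$, and since the $\omega^a$ with $\gcd(a,r_0)=1$ are precisely the primitive $r_0$th roots of unity (distinct because $\omega$ is primitive of order $r_0$), this is $\prod_{\zeta}(1-\zeta) = \Phi_{r_0}(1)$, using $\Phi_{r_0}(x) = \prod_{\zeta}(x-\zeta)$ evaluated at $x=1$. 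One should also note that the left-hand side is a priori an element of $\mathbb{Q}(\omega)$ fixed by every $\psi_{m}$ (it is $N_{L/\mathbb{Q}}$ of an element of $L$), hence lies in $\mathbb{Q}$, which is consistent with $\Phi_{r_0}(1)\in\mathbb{Z}$.

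The main obstacle is bookkeeping rather than anything deep: one must verify carefully that $\psi_{m_j}$ commutes with the product and acts on $T(2^{s_0};\omega) = \prod_i(1-\omega^{2^i})$ by sending it to $\prod_i(1-\omega^{m_j 2^i})$ (immediate since $\psi_{m_j}(\omega) = \omega^{m_j}$), and that the multiset $\{\,m_j 2^i \bmod r_0 : 0\le j\le q-1,\ 0\le i\le s_0-1\,\}$ is genuinely a set equal to $(\mathbb{Z}/r_0\mathbb{Z})^\times$ — this is exactly the coset decomposition of $(\mathbb{Z}/r_0\mathbb{Z})^\times$ by $\langle 2\rangle$, so no element is repeated and none is missed. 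A minor point to mention is that the identity is independent of the choice of coset representatives $m_1,\ldots,m_{q-1}$, since replacing $m_j$ by $m_j 2^\ell$ merely permutes the inner factors.
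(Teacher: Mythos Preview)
Your proposal is correct and follows essentially the same approach as the paper: both arguments use the product formula $T(2^{s_0};\omega)=\prod_{i=0}^{s_0-1}(1-\omega^{2^i})$ together with the coset decomposition of $(\mathbb{Z}/r_0\mathbb{Z})^\times$ by $\langle 2\rangle$ to match the product $\prod_{j,i}(1-\omega^{m_j 2^i})$ with $\prod_{\gcd(m,r_0)=1}(1-\omega^m)=\Phi_{r_0}(1)$. The paper's proof is simply the one-line version of your argument, written in the reverse direction (starting from $\Phi_{r_0}(1)$), and your additional remarks on the norm interpretation and independence of coset representatives are correct but not needed for the lemma itself.
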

\begin{proof}
Put $m_0 = 1$. Then we have
$$\Phi_{r_0}(1) = \prod_{\underset{(r_0,m)=1}{1 \leq m \leq r_0-1}}(1-\omega^m) = \prod_{j=0}^{q-1}\prod_{m \in \langle 2 \rangle}  (1-\omega^{m m_j}) = T(2^{s_0};\omega) \prod_{j=1}^{q-1} \psi_{m_j} (T(2^{s_0};\omega)).  \qedhere$$
\end{proof}

\begin{proof}[Proof of Proposition \ref{prop:prime_power}]
If $s_0 = \varphi(r_0)$, then we have $q=1$ in Lemma \ref{lem:conj} and it follows that $T(2^{s_0};\omega) = p$.
To prove (i) it remains to show that if $T(2^{s_0};\omega) \in \mathbb{Z}$, then $2$ is a primitive root modulo $r_0$. Suppose, on the contrary, that $\varphi(r_0)=s_0 q$ for some $q >1$. The value $T(2^{s_0};\omega)$ is fixed under the action of $\operatorname{Gal} (\mathbb{Q}(\omega)/\mathbb{Q})$. Again, by Lemma \ref{lem:conj}, we get 
\[
p = (T(2^{s_0};\omega))^q,\]
thus a contradiction.

To prove (ii), assume that $s_0 = \varphi(r_0)/2$ is odd. This means that $- 1 \not\in \langle 2 \rangle$, thus
$$p = \Phi_{r_0}(1) = T(2^{s_0};\omega) T(2^{s_0};\omega^{-1}) = |T(2^{s_0};\omega)|^2,$$
and the result follows, since $T(2^{s_0};\omega)$ is purely imaginary by Proposition \ref{prop:real}.
\end{proof}


\begin{proof}[Proof of Proposition \ref{prop:distinct_factors}]
Lemma \ref{lem:conj} implies that $T(2^{s_0};\omega)$ is a unit in the ring of integers of $\mathbb{Q}(\omega)$. The only such rational integers are $1,-1$, which proves (i). 

Under the assumptions of (ii), $s_0$ is even and Proposition \ref{prop:real} gives $T(2^{s_0};\omega) \in \mathbb{R}$. Since $2^{s_0/2} \not\equiv -1 \pmod{r_0}$, we obtain $-1 \not\in \langle 2 \rangle$. As in the proof of Proposition  \ref{prop:prime_power}(ii), we get
$$1 = \Phi_{r_0}(1) =  |T(2^{s_0};\omega)|^2,$$
and the result follows.

In order to prove (iii) we observe that $2^{s_0/2} \equiv -1 \pmod{r_0}$ gives
$$ T(2^{s_0};\omega) = \prod_{j=0}^{s_0/2-1} (1-\omega^{2^j})  \prod_{j=0}^{s_0/2-1}(1-\omega^{-2^j}) = |T(2^{s_0/2};\omega)|^2.$$
Suppose that $T(2^{s_0};\omega) \in \mathbb{Z}$. Then we must have $T(2^{s_0};\omega) = 1$ by Proposition \ref{prop:distinct_factors}. However,
$$\overline{T(2^{s_0/2};\omega)} =  (-1)^{s_0/2} \omega^{-2^{s_0/2}+1} T(2^{s_0/2}; \omega) = (-1)^{s_0/2} \omega^2 T(2^{s_0/2};\omega),$$
by equality (iv) of Proposition \ref{prop:TM_props}, which means that
$$(-1)^{s_0/2} = [ \omega T(2^{s_0/2},\omega)]^2.$$
If $s_0 \equiv 2 \pmod{4}$ this immediately leads to a contradiction, since $\mathbb{Q}(\omega)$ does not contain a square root of $-1$. If $s_0 \equiv 0 \pmod{4}$ we obtain
$$(\omega T(2^{s_0/2};\omega)-1)(\omega T(2^{s_0/2};\omega)+1)=0.$$
This means that the value $\omega T(2^{s_0/2};\omega)$ is invariant under the action of $\operatorname{Gal}(\mathbb{Q}(\omega)/\mathbb{Q})$ and equals either $1$ or $-1$. However, 
$$ \psi_2(\omega T(2^{s_0/2};\omega)) = \omega^2 T(2^{s_0/2};\omega) \frac{1-\omega^{2^{s_0/2}}}{1-\omega} = -\omega T(2^{s_0/2};\omega),$$
where we used $2^{s_0/2} \equiv -1 \pmod{r_0}$.
\end{proof}

\subsection{Proofs of results in Section \ref{sec:Matrix}}

\begin{proof}[Proof of Proposition \ref{prop:matrix_coef}]
The formula (\ref{eq:matrix_coef_1})  holds for $t=0$. For $t \geq 0$  by induction we have
\begin{align*}
 m_{ij}(k^{t+1};x) &= \sum_{l=0}^{d-1} m_{il}(x^{k^t}) m_{lj}(k^t;x) = \sum_{l=0}^{d-1} \sum_{\underset{ \delta(q_i , a) = q_l}{a \in \Sigma_k}} x^{ak^t}  \sum_{\underset{\delta(q_l , w)=q_j}{w \in \Sigma_k^t}} x^{[w]_k} \\
 &=  \sum_{\underset{ \delta(q_i , aw) = q_l}{a \in \Sigma_k, w \in \Sigma_k^t}} x^{[aw]_k}  = \sum_{\underset{\delta(q_i , w)=q_j}{w \in \Sigma_k^{t+1}}} x^{[w]_k}.
 \end{align*}
The proof of (\ref{eq:matrix_coef_2}) is analogous.
\end{proof}

The following auxilliary result shows that the map $v \mapsto M_v$ is a homomorphism of monoids.
\begin{lem} \label{lem:matrix_word}
Let $v,w \in \Sigma_k^{*}$. Then
$$
M_{vw} = M_v M_w.
$$
\end{lem}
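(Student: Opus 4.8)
The plan is to prove $M_{vw} = M_v M_w$ directly from the definition of $M_w$ as the $0$--$1$ matrix encoding transitions, by unwinding the matrix product entrywise and using the extended transition function $\delta(q, vw) = \delta(\delta(q,v),w)$. Recall that $m_{w,ij} = 1$ iff $\delta(q_i,w) = q_j$ and $0$ otherwise, and that $\delta$ is deterministic, so for any fixed state $q_i$ and word $v$ there is a \emph{unique} state $q_\ell$ with $\delta(q_i,v) = q_\ell$. This determinism is exactly what collapses the sum in the matrix product to a single term.

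First I would write out the $(i,j)$ entry of the product: $(M_v M_w)_{ij} = \sum_{\ell=0}^{d-1} m_{v,i\ell}\, m_{w,\ell j}$. Each summand is a product of two $0$--$1$ numbers, hence is $1$ precisely when both $\delta(q_i,v) = q_\ell$ and $\delta(q_\ell,w) = q_j$, and $0$ otherwise. Since there is exactly one $\ell$ (call it $\ell_0$) with $\delta(q_i,v) = q_{\ell_0}$, the sum reduces to the single term $m_{w,\ell_0 j}$, which equals $1$ iff $\delta(q_{\ell_0},w) = q_j$, i.e. iff $\delta(\delta(q_i,v),w) = q_j$. By the recursive definition of the extended transition function this is the same as $\delta(q_i,vw) = q_j$, which is exactly the condition for $m_{vw,ij} = 1$. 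Hence $(M_vM_w)_{ij} = m_{vw,ij}$ for all $i,j$, giving $M_vM_w = M_{vw}$.

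There is no real obstacle here; the only thing to be mildly careful about is the edge case where $v$ or $w$ is the empty word $\epsilon$, but since $M_\epsilon$ is the identity matrix by definition and $\delta(q,\epsilon) = q$, the identity holds trivially in that case as well. (One can also formally induct on $|w|$ using $\delta(q,wa) = \delta(\delta(q,w),a)$ if a cleaner write-up is preferred, reducing to the case where $w$ is a single letter, but the direct entrywise argument above already handles arbitrary words at once.) The statement that $v \mapsto M_v$ is a monoid homomorphism is then just the combination of $M_\epsilon = I$ and the multiplicativity just proved.
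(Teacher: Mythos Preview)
Your proof is correct and follows essentially the same entrywise argument as the paper: fix $(i,j)$, let $q_{\ell_0}=\delta(q_i,v)$ be the unique intermediate state, and observe that $(M_vM_w)_{ij}=m_{w,\ell_0 j}=1$ iff $\delta(q_i,vw)=q_j$ iff $m_{vw,ij}=1$. The paper's version is just more terse.
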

\begin{proof}
Fix $i,j$ and let $q_l = \delta(q_i,v)$, that is, $m_{v,il}=1$. Then by definition $m_{vw, ij} = 1$ if and only if  $\delta(q_l ,w) = \delta(q_i,vw)  = q_j$. This is further equivalent to $m_{w,lj} = 1$, which proves our claim.
\end{proof}

\begin{proof}[Proof of Proposition \ref{prop:matrix_poly}]
The equality (\ref{eq:matrix_poly_1}) is true for $t=0$.  For $t \geq 0$  by induction we have
$$M(k^{t+1};x)  = \sum_{a \in \Sigma_k} M_a x^{ak^t} \sum_{w \in  \Sigma_k^t} M_w x^{[w]_k} = \sum_{a \in \Sigma_k, w \in \Sigma_k^t} M_{aw} x^{[aw]_k} = \sum_{w \in \Sigma_k^{t+1}} M_w x^{[w]_k},$$
where in the second equality we used Lemma \ref{lem:matrix_word}. The proof of (\ref{eq:matrix_poly_2}) is analogous.
\end{proof}

\subsection{Proofs of results in Section \ref{sec:Recurrence}}

Let $f(w)= [f_0(w), \ldots,f_{d-1}(w) ]^T$ for $w \in \Sigma_k^*$. We first state an easy observation, which can be proved similarly as Lemma \ref{lem:matrix_word}.
\begin{lem} \label{lem:concat}
For all $v, w \in \Sigma_k^{*}$, there holds 
\begin{equation*} 
 f(vw) = M_v f(w)
\end{equation*}
\end{lem}

\begin{proof}[Proof of Proposition \ref{prop:poly_recur}]
 Denote for $n \geq 1$
 \[F(n;x)= [F_0(n;x), \ldots,F_{d-1}(n;x) ]^T.\]
First, we prove that our claim holds for $n=k^t$ (in fact, we only need $n=k$), and with $\widehat{F}, \widehat{M}$ replaced by $F, M$. We have
\begin{align}  \label{eq:recur_left_2}
F(k^{u+t};x) &= \sum_{w \in \Sigma_k^{u+t}} f(w) x^{[w]_k} = \sum_{w \in \Sigma_k^t} \sum_{v \in \Sigma_k^u} f(wv) x^{[wv]_k} \nonumber \\
&= \sum_{w \in \Sigma_k^{t}} M_w x^{k^u [w]_k} \sum_{v \in \Sigma_k^u}  f(v) x^{[v]_k} =    M(k^t;x^{k^u}) F(k^u;x),
\end{align}
where we used Lemma \ref{lem:concat} and Proposition \ref{prop:matrix_poly}.
Similarly,
\begin{align} \label{eq:recur_right_2}
F^R(k^{u+t};x) &=  \sum_{w \in \Sigma_k^{u+t}} f(w^R) x^{[w]_k} = 
\sum_{v \in \Sigma_k^u}\sum_{w \in \Sigma_k^{t}}f(v^Rw^R) x^{[wv]_k} \nonumber \\
&= \sum_{v \in \Sigma_k^u} M_{v^R} x^{[v]_k} \sum_{w \in \Sigma_k^{t}} f(w^R) x^{k^u[w]_k}
= M^R(k^u;x) F^R(k^t;x^{k^u}). 
\end{align}

Choose $i \in \{0,1,\ldots,c\}$. Putting $t=1$ in \eqref{eq:recur_left_2} and using \eqref{eq:linear_dep}, we obtain 
\begin{align*}
F_i(k^{u+1};x) &= \sum_{j=0}^{d-1}m_{ij}(x^{k^u})F_j(k^u;x) \\
&= \sum_{j=0}^{c}m_{ij}(x^{k^u})F_j(k^u;x) + \sum_{p=c+1}^{d-1}m_{ip}(x^{k^u})\sum_{j=0}^{c}\alpha_{pj}F_j(k^u;x) \\
&= \sum_{j=0}^{c} \widehat{m}_{ij}(x^{k^u})F_j(k^u;x),
\end{align*}
and thus $\widehat{F}(k^{u+1};x)= \widehat{M}(x^{k^u})\widehat{F}(k^{u};x)$. By induction, for any $t \geq 1$ we get
\begin{equation}\label{eq:recur_left_3}
\widehat{F}(k^{u+t};x)= \widehat{M}(k^t;x^{k^u})\widehat{F}(k^{u};x).
\end{equation}
Now, take any  $n\geq1$ and $t$ such that $k^{t-1}+1 \leq n \leq k^t$.
By truncating the terms of degree $\geq k^un$ in \eqref{eq:recur_left_3} we obtain \eqref{eq:recur_left}. The identity \eqref{eq:recur_right} can be proved by an analogous reasoning, starting from \eqref{eq:recur_right_2}.
\end{proof}

\begin{proof}[Proof of Theorem \ref{thm:recur}]
We consider case 1 first. 
By the assumption that $\mathcal{A}$ forward-induces $\{a(n)\}_{n \geq 0}$  we have $A(n;x) = F_0(n;x)$ for all $n \geq 1$.
The sequence $\{\omega^{k^n}\}_{n \geq 0}$ is periodic with period $s$. 
Hence, $\widehat{M}(k^{ms}; \omega) = \widehat{M}^m(k^s; \omega)$ and by (\ref{eq:recur_left}) we get
$$
\widehat{F}(k^{ms};\omega) =
\widehat{M}^m(k^s; \omega)
\widehat{F}(1;\omega).
$$
Using the assumption $C(\omega, \widehat{M}(k^s;\omega)) = 0$, we obtain 
\begin{equation} \label{eq:recurrence_1}
\sum_{m=0}^{l} C_m(\omega) 
\widehat{F}(k^{ms};\omega) =
0.
\end{equation}
Left-multiplying (\ref{eq:recurrence_1}) by $\widehat{M}(n; \omega^{k^u})$ and using \eqref{eq:recur_left}, we get
$$
\sum_{m=0}^{l} C_m(\omega) 
\widehat{F}(k^{ms}n;\omega) =
0.
$$
The result follows by considering the first entry in this vector.
 
 Case (ii) is slightly easier to prove. By the assumption that $\mathcal{A}$ backward-induces $\{a(n)\}_{n \geq 0}$  we have $A(n;x) = F_0^R(n;x)$ for all $n \geq 1$.
Similarly as before we get $\widehat{M}^R(k^{ms}; \omega) =(\widehat{M}^R(k^s; \omega))^m$. As a consequence, by (\ref{eq:recur_right}) we have
$$\widehat{F}^R(k^{ms}n;\omega) = (\widehat{M}^R(k^s; \omega))^m \widehat{F}^R(n;\omega)$$
for all $n \geq 1$. 
From the assumption
$C(\omega, \widehat{M}^R(k^s;\omega)) = 0$
we obtain 
$$
\sum_{m=0}^{l} C_m(\omega) 
\widehat{F}^R(k^{ms}n;\omega) = 0.
$$
and the result follows.
\end{proof}

\subsection{Proofs of results in Section \ref{sec:Order}}
\begin{proof}[Proof of Proposition \ref{prop:lin_dep}]
For any $w \in \Sigma_k$ there exists $j \in \{0,\ldots,e\}$  such that $\delta(q_i,w)=  \delta(q_i,w_j)$ for all $i=0,\ldots,d-1$. The result follows immediately.
\end{proof}

\begin{proof}[Proof of Proposition \ref{prop:isomorphism}]
Let $q \in \delta(\{q'\} \times \Sigma_k^*)$ and $v \in \Sigma_k^*$. We have
$$\sum_{i=0}\beta_i f_{\rho^i(q)}(v) =  \sum_{i=0}^{m-1} \beta_i \uptau(\delta(\rho^i(q),v)) = \sum_{i=0}^{m-1} \beta_i \uptau(\rho^i(\delta(q,v))),$$
and the result follows, since $\delta(q,v) \in \delta(\{q'\} \times \Sigma_k^*)$.
\end{proof}

\begin{proof}[Proof of Proposition \ref{prop:pattern}]
Let $v =  v_1 \cdots v_e$ with $v_i \in \Sigma_k$ and put $v_0 = \epsilon$ for a more consistent description. 
First, we construct a $k$-DFAO $\mathcal{A} =(Q,\Sigma_k,\delta,q_0,\Delta,\uptau)$ which returns $a(n)$ given the input $(n)_k$. 
Put $Q = \{0,1,\ldots,m-1\} \times \{0,1,\ldots,e-1\}$ and $q_0 = (0,0)$. We define the transition function $\delta$ in such a way, that arriving at a state $(p,t) \in Q$ means the following:
\begin{itemize}
\item counting modulo $m$, so far $p$ occurences of $v$ in $(n)_k$ have been found,
\item $t$ is the largest number such that the part of the input read so far is of the form $w v_0 \cdots v_t$ for some $w \in \Sigma_k^{*}$. 
\end{itemize}
More precisely, fix $j \in \Sigma_k$, the current symbol in the input.  Let $t'$ be the length of the longest suffix of $v_0 v_1 \cdots v_{t} j$ which is also a proper prefix of $v$ (keeping in mind that $|v_0| = 0$). Roughly speaking, this means that after reading $j$, we will have read $t'$ symbols of the next (potential) occurrence of $v$. Now we consider two possibilities.
 If $t=e-1$ and $j = v_e$, then we define $\delta((p,e-1),v_e) = (p+1 \bmod{m},t')$. Roughly speaking, this means that the symbol currently being read successfully completes an occurrence of $v$ and we start counting again towards the next occurrence (the definition of $t'$ takes into account the possiblility of overlapping ocurrences of $v$). 
In all the other cases we let $\delta((p,t),j) = (p,t')$, in particular $\delta((p,t),v_{t+1}) = (p,t+1)$ for $t < e-1$.
 Finally, let $\uptau(p,t) = \xi_m^{p}$. It is clear from the interpretation of the states $(p,t)$ that indeed $\uptau(\delta(q_0, (n)_k)) = a(n)$. 

If $v_1 \neq 0$, then $\delta(q_0,0)=q_0$, thus $\mathcal{A}$ forward-induces $\{a(n)\}_{n \geq 0}$. Otherwise, recall from Section \ref{sec:Preliminaries} that we can add to $\mathcal{A}$ a new initial state $q_0'$ to create a $k$-DFAO $\mathcal{A}'$, in which the structure of $\mathcal{A}$ is preserved and which forward-induces $\{a(n)\}_{n \geq 0}$.

Now, in order to use Prop \ref{prop:isomorphism}, define $\rho \colon Q \to Q$ by $$\rho(p,t) = (p+1 \bmod{m}, t).$$ Then clearly $\delta(\rho(q),j) = \rho(\delta(q,j))$ and $\uptau (\rho(q))= \xi_m \uptau (\rho(q)) $ for all $q \in Q$ and $j \in \Sigma_{k}$. By Proposition \ref{prop:isomorphism} it follows that for each $q \in Q$
$$f_{\rho(q)} = \xi_m f_q,$$
where $f_q$ denotes the finite-state function corresponding to $q$. Since $\rho$ has exactly $e= |v|$ orbits, there are at most $|v|$  linearly independent finite state functions $f_q$ for $q \in Q$.  If $v$ begins with $0$, we also need to account for the finite-state function corresponding to $q'_0$. This ends the proof by Proposition \ref{prop:lin_dep}.
\end{proof}

\subsection{Proofs of results in Section \ref{sec:Integrality}}

\begin{proof}[Proof of Theorem \ref{thm:extension}]
Assume that $\mathcal{A}$ forward-induces $\{a(n)\}_{n \geq 0}$ (the other case is proved similarly).
Observe that
 \begin{align*}
\widehat{M}(k^s; \omega) &=\left( \widehat{M}(\omega^{k^{n-1}}) \cdots  \widehat{M}(\omega^{k^2})  \widehat{M}(\omega^k) \right) \widehat{M}(\omega)  , \\
\widehat{M}(k^s; \omega^k) &= \widehat{M}(\omega) \left( \widehat{M}(\omega^{k^{n-1}}) \cdots  \widehat{M}(\omega^{k^2})  \widehat{M}(\omega^k)\right),
 \end{align*}
 which implies that $\widehat{M}(k^s; \omega^k)$ and $\widehat{M}(k^s; \omega)$ have the same characteristic polynomial $C(\omega,y) \in (\mathbb{Q}(\omega))[y]$. It follows that $\psi_k(C_m(\omega)) = C_m(\omega)$,  thus $ C_m(\omega)$ is invariant under the action of $\langle \psi_k \rangle$. 
\end{proof}

\begin{proof}[Proof of Theorem \ref{thm:int_coef}]
Since $C(x,\widehat{M}(k^s;\omega))=0$, Theorem \ref{thm:recur} implies that for all $n \geq 1$
\[\sum_{m=0}^{qc} D_m(\omega) A(k^{ms}n;\omega) = 0.\]
By Theorem  \ref{thm:extension}, $C_j(\omega), C_j(\omega^{u_1}),\ldots,C_j(\omega^{u_{q-1}})$ are all the Galois conjugates  (not necessarily distinct) of $C_j(\omega)$ for each $j=0,\ldots,c$.  The coefficients $D_m(\omega)$ are symmetric polynomials in $C_0(\omega), \ldots, C_c(\omega)$, and therefore $D_m(\omega) \in \mathbb{Q}$.
\end{proof}

\bibliographystyle{amsplain}
\bibliography{references}

\providecommand{\bysame}{\leavevmode\hbox to3em{\hrulefill}\thinspace}
\providecommand{\MR}{\relax\ifhmode\unskip\space\fi MR }
\providecommand{\MRhref}[2]{%
  \href{http://www.ams.org/mathscinet-getitem?mr=#1}{#2}
}
\providecommand{\href}[2]{#2}
\begin{thebibliography}{1}

\bibitem{AS03}
Jean-Paul Allouche and Jeffrey Shallit, \emph{Automatic sequences}, Cambridge
  University Press, Cambridge, 2003, Theory, applications, generalizations.
  \MR{1997038}

\bibitem{BLM76}
John Brillhart, J.~S. Lomont, and Patrick Morton, \emph{Cyclotomic properties
  of the {R}udin-{S}hapiro polynomials}, J. Reine Angew. Math. \textbf{288}
  (1976), 37--65. \MR{0498479}

\bibitem{Do01}
Christophe Doche, \emph{On the real roots of generalized {T}hue-{M}orse
  polynomials}, Acta Arith. \textbf{99} (2001), no.~4, 309--319. \MR{1845687}

\bibitem{DM00}
Christophe Doche and Michel~Mend\`es France, \emph{Integral geometry and real
  zeros of {T}hue-{M}orse polynomials}, Experiment. Math. \textbf{9} (2000),
  no.~3, 339--350. \MR{1795306}

\bibitem{Ko97}
Dexter~C. Kozen, \emph{Automata and computability}, Undergraduate Texts in
  Computer Science, Springer-Verlag, New York, 1997. \MR{1633052}

\bibitem{Ru59}
Walter Rudin, \emph{Some theorems on {F}ourier coefficients}, Proc. Amer. Math.
  Soc. \textbf{10} (1959), 855--859. \MR{116184}

\bibitem{Sh53}
Harold~S. Shapiro, \emph{Extremal problems for polynomials and power series},
  ProQuest LLC, Ann Arbor, MI, 1953, Thesis (Ph.D.)--Massachusetts Institute of
  Technology. \MR{2938495}

\end{thebibliography}
\end{document}